\documentclass[final]{l4dc2021}
% The following packages will be automatically loaded:
% amsmath, amssymb, natbib, graphicx, url, algorithm2e
% \title[Short Title]{Full Title of Article}
% \usepackage{lipsum}
\usepackage{times}
\usepackage{bbm}
\usepackage{amssymb}

\usepackage{algorithm}
\usepackage[noend]{algpseudocode}
\makeatletter
\def\BState{\State\hskip-\ALG@thistlm}

\listfiles
\RequirePackage{filecontents}
\makeatletter
\def\BState{\State\hskip-\ALG@thistlm}
\makeatother
\usepackage[noend]{algpseudocode}

\newcommand{\la}{\langle}
\newcommand{\ra}{\rangle}

\newcommand{\E}{\mathbb E}
\newcommand{\R}{\mathbb R}
\newcommand{\Ss}{\mathbb S}
\newcommand{\Var}{\text{Var}}

\newcommand{\de}{\delta}

\newcommand{\Prob}{\mathbb P}
\usepackage[utf8]{inputenc} % allow utf-8 input
\usepackage[T1]{fontenc}    % use 8-bit T1 fonts
\usepackage{hyperref}       % hyperlinks
\usepackage{url}            % simple URL typesetting
\usepackage{booktabs}       % professional-quality tables
\usepackage{amsfonts}       % blackboard math symbols
\usepackage{nicefrac}       % compact symbols for 1/2, etc.
\usepackage{microtype}      % microtypography
\usepackage{cleveref}
\usepackage{graphbox}
\usepackage{import}
\usepackage{multicol, blindtext}
\usepackage[font=small,labelfont=bf]{caption}

\usepackage{booktabs}

\usepackage{fullpage}
\usepackage{hyperref}
\usepackage{url}
\usepackage{multicol}

\usepackage{graphicx}

\newtheorem{fact}{Fact}
\newtheorem{assumption}{Assumption}

\newcommand{\be}{\begin{equation}}
\newcommand{\ee}{\end{equation}}

 \title[Log Barriers Safe Optimization]{Safe non-smooth black-box optimization with application to policy search}
 \author{%
 \Name{Ilnura Usmanova} \Email{ilnurau@control.ee.ethz.ch}\\
 \addr Automatic Control Laboratory,
 ETH Zürich, Switzerland
 \AND
 \Name{Andreas Krause} \Email{krausea@ethz.ch}\\
 \addr Machine Learning Institute,
 ETH Zürich, Switzerland%
 \AND
  \Name{Maryam Kamgarpour} \Email{mkamgar@control.ee.ethz.ch}\\
 \addr Automatic Control Laboratory,  ETH Zürich, Switzerland %
}
% \author{
% lnura Usmanova \\
% 	Automatic Control Laboratory\\
% 	ETH Zürich\\
% 	\texttt{ilnurau@control.ee.ethz.ch} \\
% 	 \And
% 	 Andreas Krause \\
% 	 Machine Learning Institute \\
% 	ETH Zürich \\
% 	 \texttt{krausea@ethz.ch} \\
% 	 \and
% 	Maryam Kamgarpour\\
% 		Automatic Control Laboratory\\
% 		ETH Zürich\\
% 		\texttt{mkamgar@control.ee.ethz.ch} \\
% 		}

\begin{document}
% 	\twocolumn[
% \aistatstitle{Log Barriers for Safe Non-smooth Optimization\\ with Application to Policy Search.}%}
% \aistatsauthor{ Author 1 \And Author 2 \And  Author 3 }
% \aistatsaddress{ Institution 1 \And  Institution 2 \And Institution 3 } ]
% \aistatsauthor{Anonymous}% \And Anonymous2 \And Anonymous3}
% \aistatsaddress{-%\And - \And -
%  } 
% \aistatsauthor{Ilnura Usmanova \And  Andreas Krause \And  Maryam Kamgarpour }
% \aistatsaddress{ Automatic Control Laboratory,\\
%  ETH Zürich, Switzerland \And  Machine Learning Institute,\\
%  ETH Zürich, Switzerland \And Automatic Control Laboratory,\\
%  ETH Zürich, Switzerland
%  } 
% ]
	\maketitle
%	 Ilnura Usmanova, ETH Zürich, 04.08.2019
	\begin{abstract}
	  For safety-critical black-box optimization tasks, observations of the constraints and the objective are often noisy and available only for the feasible points. We propose an approach based on log barriers to find a local solution of a non-convex non-smooth black-box optimization problem $\min f^0(x)$ subject to  $f^i(x)\leq 0,~ i = 1,\ldots, m$, %at the same time, 
	  guaranteeing constraint satisfaction while learning an optimal solution with high probability. Our proposed algorithm exploits noisy observations to iteratively improve on an initial safe point until convergence. 
	  We derive the convergence rate and prove safety of our algorithm. We demonstrate its performance in an application to an iterative control design problem. \footnote{We thank the support of Swiss 
National Science Foundation, under the grant SNSF 200021\_172781, and ERC under the European Union's Horizon 2020 research and innovation programme 
grant agreement No 815943.}
% \footnote{In the interest of space, we refer to a companion ArXiv preprint https://arxiv.org/abs/1912.09466 for the proofs.}
\footnote{%Preliminary work.
Accepted by Learning for Dynamics \& Control conference (L4DC). The long version. }

% 		\footnote{We thank the support of Swiss 
% National Science Foundation, under the grant SNSF 200021\_172781, and ERC under the European Union's Horizon 2020 research and innovation programme 
% grant agreement No 815943.}
	\end{abstract}

	\section{Introduction}
\paragraph{Motivation} Machine learning algorithms are increasingly being deployed for safety-critical emerging applications  %increasingly deployed in ever high stake applications
such as autonomous driving, personalized medicine and robotics. 
In such scenarios, safety and reliability of these algorithms is crucial. 
% Thus, their safety and reliability is becoming more and more crucial. 
% Black-box (bandit) set-up is often the case in the above examples, especially, in cases in which the model is unknown, too complex or unreliable for safe operation.  
When the model is unknown, too complex or unreliable, it is common to adopt a black-box \emph{bandit} setup; our goal is to include safety in these learning techniques. 
% Learning approaches based on black-box optimization are often being employed in the above examples, especially, in settings in which the model is unknown, too complex or unreliable for safe operation. These scenarios motivate us developing \textit{safe learning} techniques. 

\paragraph{Related work} 
In the optimization literature, several constrained optimization algorithms exist %to guarantee feasibility %  there are constrained optimization algorithms 
guaranteeing feasibility of the iterates given just local information about the constraints. These include %bundle methods,
 Feasible Sequential Quadratic Programming (FSQP) \citep{jian2005feasible,luo2012modifying,tang2014feasible}, the Method of Feasible Directions (MFD) \citep{zoutendijk1960methods}, and their variations. However, all these methods require first and/or second order information and do not consider the presence of noise. On the other hand, there are many works on derivative-free optimization, including non-convex and non-smooth problems \citep{balasubramanian2018zeroth, nesterov2017random,ghadimi2013stochastic,lan2013complexity}, based on finite difference gradient estimation techniques. However, these methods do not guarantee the feasibility of the points where measurements are taken with respect to unknown constraints. This issue can be addressed by interior point methods, where a barrier function is optimized. However, existing work on interior point methods typically require second order information. %Interior point optimization methods based on optimizing a barrier function guarantee constraints satisfaction. However, interior point methods usually require second order information.

Safe learning with zero-th order (bandit) information has been considered in Bayesian Optimization  \citep{berkenkamp2016bayesian,sui2015safe} for non-convex constrained problems. %There is a line of works based on Bayesian Optimization  which explore safe learning with zero-th order (bandit) information for non-convex constrained problems. 
As these works aim to compute a global optimum, they have to solve a nontrivial non-convex subproblem %has to be solved 
at each iteration. Moreover, for most common kernel functions, these algorithms require a number of measurements that is exponential on the dimensionality. This makes Safe Bayesian Optimization methods not always applicable to high dimensional problems. Moreover, appropriately choosing a prior distribution and the kernel parameters might not be a trivial task. %Moreover, they require appropriate prior distributions and kernel parameters, which are not always easy to choose. 
Gradient based local methods usually do not suffer from these drawbacks. 

First order methods in application to barrier functions in the recent past were considered to have exponential runtime bounds due to the bad behavior of any barrier on the boundary of the feasible set. However, in the recent work \citep{hinder2019poly} the authors demonstrated that for smooth problems a gradient descent algorithm with adaptive step size on a log barrier function can be tractable, i.e., present attractive polynomial runtime convergence. % have polynomial runtime bounds. 
Motivated by safe learning problems, the recent work \citep{usmanova2019log} extended this approach by \citep{hinder2019poly} to smooth non-convex optimization problems with zero-th order noisy measurements. However, the bound on the number of measurements was valid only in the case of \emph{a single} smooth constraint function. In this paper, generalizing the approach of  \citep{usmanova2019log}, we develop a safe algorithm for the \textit{non-smooth} non-convex constrained optimization problems subject to an arbitrary number of constraint functions. 
% Recently, the log-barriers based first order method was proposed in \citep{hinder2019poly}, and extended to zero-th order safe optimization for smooth functions in \citep{usmanova2019log}. 
In Table \ref{table:1}, a comparison of our algorithm with existing methods for unconstrained and constrained zero-th order non-convex optimization is provided. 
In the first two algorithms a 2-point bandit feedback is assumed, i.e., it is possible to measure at several points with the same noise realization. 
 In our algorithm we assume a more realistic and more challenging setup with changing noise at each measurement.

Safe learning is widely used in control of unknown dynamical systems. For example, the work by \citep{dean2019safely} exploited system identification and robust optimal control to learn the safe linear quadratic regulator (LQR) subject to constraints on the state and input trajectories. %safety  is discussed in an important class of policy learning when the dynamics is linear in linear quadratic regulator (LQR) problem \citep{dean2019safely}. %,fazel2018global,abbasi2019model}. 
There are many works aiming at guaraneeing safety while learning the optimal policy and dynamics in non-linear control, such as \citet{fisac2018general,berkenkamp2016bayesian,gillula2012guaranteed}. Often  Bayesian optimization approach is used to solve the above problems. However, Bayesian optimization might not enjoy acceptable scalability with dimensionality, thus limiting its applicability to control. 
% and it is often discussed that due to bad scalability of this approach with dimensionality, its applicability is restricted. 
Non-smoothness can also appear in some control problems such as bipedal walking, etc \citep{ames2014human}. In this paper, we consider an application of our method to safe learning for model-free control. We test our algorithm on a low dimensional control system, but theoretically the dependence on the dimensionality is only polynomial and the algorithm can be applied for higher dimensional problems. 

\begin{table*}[t]\label{T1}
    	\footnotesize
 		\begin{center}
 		\scalebox{0.9}{
 		\begin{tabular}[h!]{|p{13mm}|p{43mm}|p{35mm}|p{36mm}|p{35mm}|}
 			% |}
 				\hline
%%%%%%%%%%%%%%%%%
 				 Problem
 				& Unconstrained \& smooth objective
				& Known linear constraints \& smooth objective
				&  Single unknown smooth constraint \& smooth objective &  Several unknown non-smooth constraints \& non-smooth objective\\
%%%%%%%%%%%%%%%%%%
                \hline
 				 Feedback
 				& 2-point bandit feedback
				& 2-point bandit feedback
				& 1-point bandit feedback & 1-point bandit feedback\\
%%%%%%%%%%%%%%%%
                \hline
 				 Safety
 				& -
				& Yes \textit{(known constraints)}
				& Yes 
				& Yes\\
%%%%%%%%%%%%%%%%
				\hline
				Optimality \newline condition &  Stationary point: \newline $\E\|\nabla f(x_T)\|_2 \leq \eta
 				$ &
 				$\eta$-stationary point: $\forall u\in D$  \newline $\E\la \nabla f(x_T), x_T - u\ra \leq \eta$ 
 				& $\eta$-approximate KKT point & $\eta$-approximate KKT point of the smoothed problem\\
%%%%%%%%%%%%%%%
                \hline
				Number of %\newline
				measurements
				&  $O\left(\frac{d}{\eta^4}\right)$ or
				% \newline 
				 $O\left(\frac{d}{\eta^{3.5}}
				\right) + \tilde O \left(\frac{d^4}{\eta^{2.5}}
				\right) $
				\newline \citep{balasubramanian2018zeroth}
				% (Balasubramanian, Ghadimi, 2018) 
				& $ O\left(\frac{d}{\eta^4
				}\right)$ 
				\newline \citep{balasubramanian2018zeroth}
				% (Balasubramanian, Ghadimi, 2018)
				& $\tilde O\left(\frac{d^{3}}{\eta^{7}}
				\right)$ 
				 \newline \citep{usmanova2019log}
				& $\tilde O\left(\frac{d^3}{\eta^{9}}\right)$ \newline
				\textbf{(this work)}
				\\
%%%%%%%%%%%%%%%%				
				\hline
                \end{tabular}
	}
	\end{center}
			\caption{%Lower $\Omega(\cdot)$ and 
			Upper bounds on number of  zero-th order oracle calls for non-convex smooth optimization algorithms.}
			\label{table:1}
\end{table*}
% }
% In the recent work (AISTATS) it was shown that the log barrier gradient descent method applied to smooth problems with one smooth constraint converges to the approximated KKT point with guaranteed number of measurements. We extend this to non-smooth optimization problem with several non-smooth constraints. 

\paragraph{Our contributions} Our contribution is to propose an algorithm to find an approximate local solution to non-convex non-smooth cost functions subject to non-convex non-smooth constraints.  Furthermore, we prove the safety of the approach and derive its convergence rate %..and to establish its convergence rate 
in expectation in terms of the variance of the noise. % We pose the problem to find an approximate local solution and to show the convergence rate in expectation, given the variance of the noise. 
%Our algorithm is based on the log barrier gradient descent approach, and only uses zeroth order information, also prove safety with high probability of our algorithm. 
% We use the smoothing by randomization technique and log barriers gradient descent approach \citep{hinder2019poly} for our algorithm. 
Our algorithm is based on the log barrier gradient descent approach. %Compared to \citep{hinder2019poly}, our method is able to optimize non-smooth noisy problems only using zeroth order information, and guarantees the constraints satisfaction with high probability during optimization. 
Our convergence is with respect to an approximate stationary point of the smooth approximation of the problem. In the special case, where both the cost function and the constraints are smooth, we establish convergence to an approximate KKT point of the initial problem. We validate the  performance of our algorithm in application to a simple model-free control problem. 

\section{Problem statement}
\paragraph{Notations and definitions.} 
	Let $\|\cdot\|$ 
	denote the $l_2$-norm 
	on $\R^d$. A function $f: \R^d \rightarrow \R$ is called \textit{$L$-Lipschitz continuous} if
	$|f(x) - f(y)|\leq L\|x - y\|_2.$ 
	It is called \textit{$M$-smooth} if 
	the gradients $\nabla f(x)$ are $M$-Lipschitz continuous, i.e.,
	$\|\nabla f(x) - \nabla f(y)\|_2\leq M\|x - y\|_2. $ 
	A random variable $\xi$ is zero-mean $\sigma^2$-sub-Gaussian if  
	$\forall \lambda \in \R ~ 
	\E\left[ e^{\lambda \xi}\right] \leq \text{exp}\left(\frac{\lambda^2\sigma^2}{2}\right),$ 	which implies that  $\Var\left[\xi \right] \leq \sigma^2$ (this can be shown using Tailor expansion).  
	% Examples of sub-Gaussian distribution 
	% are any distributions with the bounded support and normal distribution. 
	By $\mathbb S^d$ and $\mathbb B^d$ we denote the unit sphere and the unit ball in $\R^d$, respectively. We denote the characteristic function of a set $\mathcal X \subseteq \R^d$ by $\mathbb I_{\mathcal X}=\begin{cases}0,&x\in \mathcal X\\
	+\infty,&x\notin\mathcal X
	\end{cases}.$
	\paragraph{Problem formulation}
	We consider safe non-convex non-smooth constrained  optimization problem
	\hspace{-0.2cm}
	\begin{align}\label{problem}
	\min_{x\in \R^d} &~f^0(x) \nonumber\\
	\text{subject to }&~  f^i(x) \leq 0, ~~i =  1,\ldots,m,
	\end{align}
	where the objective function $f^0: \R^d \rightarrow \R$ and the constraints $f^i: \R^d \rightarrow \R$ are unknown $L$-Lipschitz continuous functions, and can only be accessed at feasible points $x$. We denote by $D$ the feasible set $D := \{x\in\R^d: f^i(x)\leq  0, i = 1,\ldots,m\}.$ 
	
	\begin{assumption}\label{A:1}
		The set $D$ has a non-empty interior, and there exists a known starting point $x_0$ for which $f^i(x_0)<0$  for $i = 1, \ldots, m.$
	\end{assumption}
	This assumption %might seem restrictive, however, it 
	is common in works on safe learning \citep{berkenkamp2016bayesian,sui2015safe} or on model-free LQR problems  \citep{fazel2018global, abbasi2019model}.
	\paragraph{Information.}
	We assume access to noisy measurements of all cost and constraint function values for any requested feasible point $x \in D$. In particular, the measurements are given by 
	$ F^i(x,\xi^i) = f^i(x) + \xi^i,~\forall i = 0,\ldots, m$  with 
	zero-mean sub-Gaussian noise $\xi^i$. The $\xi^i$'s are i.i.d. across different measurements. 
	
	\paragraph{Goal.} The goal of the algorithm is to find an approximate local optimum, using only noisy zeroth-order information. Moreover, it has to guarantee safety, i.e., constraint satisfaction with high probability for all points at which measurements are taken. For differentiable non-convex objective and constraints, the notion of local optimality is captured by KKT condition. In this setting, we show that our algorithm converges to an $\eta$-approximate KKT point for any $\eta > 0$ with constants $\tau_1,\tau_2 >  0$, that are fixed and independent on $\eta$:
	\begin{align}
	\vspace{-0.3cm}\label{KKT.1}
	&\lambda^i,-f^i(x) \geq 0,~ \forall i = 1,\ldots,m \tag{$\eta$-KKT.1}\\\label{KKT.2}
	&\lambda^i(-f^i(x)) \leq \tau_1\eta ,~ \forall i = 1,\ldots,m \tag{$\eta$-KKT.2}\\\label{KKT.3}
	&\|\nabla_x L(x, \lambda)\|_2 \leq \tau_2\eta, \tag{$\eta$-KKT.3}
	\end{align}
	where $L(x,\lambda) := f^0(x)+ \sum_{i = 1}^{m} \lambda^i f^i(x)$ is the corresponding Lagrangian function. For non-differentiable non-convex objective and constraints, local optimality conditions are less understood. In this case, we show convergence to an approximate KKT point of a corresponding smoothed problem. The smoothing will be described in the approach below. In Corollary \ref{sec:relation},  we connect the solution of the smoothed problem with   an approximate KKT point of the initial problem for the case of differentiable cost function and constraints.
	% Hence, we show that our algorithm converges to local optimality for differentiable objective  and constraints, whereas for non-differentiable, we show that the smoothed problem satisfies $\eta$-approximate KKT condition.
	
\section{Proposed approach}
We propose to construct a log barrier for the smooth approximation of problem (\ref{problem}), and then apply the zero-th order stochastic gradient descent with an adaptive step size to minimize it. To estimate the gradient of the smoothed function we sample points around the current iterate, and take measurements at these points. A measurement is denoted as \emph{safe} if the point at which it is taken is feasible with high probability.
\subsection{Zero-th order gradient estimation.} \label{sec:oracle}
	Our algorithm uses a randomized zero-th order gradient estimator for cost and constraint functions. For a point $x_k$ the gradient is estimated taking $n_k$ samples uniformly at random on the unit sphere $\mathbb S^d$. 
	%by sampling $n_k$ times a vector $s$ from the uniform distribution on the unit sphere $\mathbb S^d$:%  multiplied with sampling radius $\nu>0$:% and averaging them:
	\begin{align}\label{G_estimator}
	G^i(x_k, \nu) := \sum_{j = 1}
	^{n_k}\frac{\hat G^i_j(x_k, \nu)}{n_k},~ \hat G^i_j(x_k, \nu) := d \frac{F^i(x_k+ \nu s_{kj}, \xi_{kj}^{i+}) - F^i(x_k, \xi_{kj}^{i-})}{\nu} s_{kj}  
	\end{align}
	for $i = 0,\ldots,m$, where all $\{[\xi_{kj}^{i+},\xi_{kj}^{i-}]\}_{j = 1\ldots,n_k}$ are i.i.d. sub-Gaussian random variables, $\nu > 0$ is the sampling radius, $s_{kj}$ are the sampled unit vectors. For the sampling radius $\nu\geq 0$ define the $\nu$-\textit{smoothed} estimate of the function $f(x)$ by
	% \begin{align}\label{smoothing}
	$f_{\nu}(x):= \E_{b}f(x + \nu b),$ where  $b$ is uniformly distributed on the unit ball $\mathbb B^d$ %\iu{( i.e., $b\sim U(\mathbb B^d)$)}
	Then  $\E_{s_k, \xi_k}  G^i(x_k,\nu) = \nabla f^i_{\nu}(x_k)$ \citep{flaxman2005online, nesterov2017random, balasubramanian2018zeroth}.  This shows that in expectation we can get a gradient of the smooth estimate $f_{\nu}(x)$ of non-smooth function $f(x)$ using randomized gradient estimator. The following properties hold:
% 	\citet{yousefian2010convex} (Lemma 2) showed the first property,
	 %(\citet{nesterov2017random} showed similar properties for another sampling scheme): 
	\begin{enumerate}
		\item[1)] The gradient $\nabla f_{\nu}(x)$ is Lipschitz continuous with constant $M_{\nu}$ satisfying 
% 		$M_{\nu}\leq \frac{\sqrt{d}L}{\nu}$ 
		$M_{\nu}\leq \frac{\sqrt{d}L}{\nu}$. (For the proof of this property see Appendix \ref{proof:A00}.)
		\item[2)] %The smoothed function $f_{\nu}(x)$ is $L\nu$ far from $f(x)$, i.e.,  
		$\forall x \in \R^d ~  |f_{\nu}(x) - f(x)|\leq \nu L$ \citep{hazan2016graduated}.
		\item[3)]The function $f_{\nu}(x)$ is Lipschitz continuous with $L_{\nu}\leq L,$ which implies $\|\nabla f_{\nu}(x)\|\leq L$ for differentiable  $f_{\nu}(x)$. (For the proof of this property see Appendix \ref{proof:A0}.)
	\end{enumerate}
% 	In the above, \citet{hazan2016graduated} showed the second of the properties of  $f_{\nu}(x)$

\subsection{ Smoothed log barrier function.}
We  address the safe learning problem using the log barriers approach. 
 Define $f^c(x)= \max_{i = 1, \ldots, m} f^i(x),$ which is in general non-smooth and non-convex. The logarithmic barrier with parameter $\eta>0$ of the initial problem with the constraints replaced with $f^c(x)$ above is defined as %...For the initial problem with one constraint $f^c(x)$ we can define the logarithmic barrier function
$ B_{\eta}(x) = f^0(x) - \eta \log (-f^c(x)).$ % which is not necessarily smooth.
We define the \textit{locally} \textit{smooth} barrier function and its gradient using \textit{smoothed} functions $f^0_{\nu}(x)$ and $f^c_{\nu}(x)$:% as follows:
\begin{align}
B_{\eta,\nu}(x)& = f^0_{\nu}(x) - \eta \log (-f^c_{\nu}(x)),\\
\nabla B_{\eta,\nu}(x) &= \nabla f^0_{\nu}(x) + \eta \frac{ \nabla f^c_{\nu}(x)}{-f^c_{\nu}(x)},
\end{align}
 It is evident that the gradient of the barrier grows to infinity while converging towards the boundary 
 and hence, the barrier function cannot be smooth. \textit{Local} smoothness of barrier function refers to existence of a value $M_{2,\nu}(x)$ that bounds the change in barrier gradient $\frac{\|\nabla B_{\nu,\eta}(x) - \nabla B_{\nu,\eta}(y)\|}{\|x-y\|}$ for a ball around point $x$, where $M_{2,\nu}(x)$ is determined later in Appendix \ref{proof:F} in (\ref{l2:bound}).
%The estimators $G^0(x, \nu ), G^c(x, \nu)$ defined in (\ref{G_estimator}) for $f^0(x)$ and $f^c(x)$ respectively, 
% are unbiased estimators for $\nabla f^0_{\nu}(x)$ and $\nabla f^c_{\nu}(x).$
Our goal is to design an algorithm that converges to a locally optimal  %point of the smooth log
%We want to show the convergence of our algorithm to local stationary 
point $x^*$ of the smoothed log barrier $ 
B_{\eta,\nu}(x)$, which is basically an unconstrained approximation of constrained smoothed problem:
$ f^0_{\nu}(x) + \mathbb I_{f^c_{\nu}(x)\leq 0}.$ Then, we show that this point $x^*$ satisfies  $\eta$-approximate KKT conditions for the smoothed problem $\min_{f^c_{\nu}(x)\leq 0} f^0_{\nu}(x)$.
% Moreover, for differentiable objective and constraints $f^i(x), i = 0,\ldots,m$, the obtained solution satisfies $\eta$-approximate KKT conditions for the initial problem (\ref{problem}), that will be shown later.

% First order methods in application to barrier functions in the recent past were considered to be extremely inefficient due to the bad behavior of any barrier on the boundary of the feasible set.
% However, in the recent work \citep{hinder2019poly} the authors demonstrated that for smooth problems a gradient descent with adaptive step size on log barrier function can be tractable due to adaptivity. The recent work \citep{usmanova2019log} extended this approach to safe zero-th order noisy smooth optimization problems. However, the theoretical bound on the number of measurements was obtained only for the case with one smooth constraint. Now we develop a safe algorithm for the non-smooth problem with no restrictions on the number of constraints. 

\subsection{Log barrier gradient estimator.}\label{sec:barrier_estimator}
First, we need to propose a way to estimate 
$\nabla B_{\eta,\nu}(x_k) = \nabla f^0_{\nu}(x_k) + \eta \frac{ \nabla f^c_{\nu}(x_k)}{-f^c_{\nu}(x)}.$ 
To estimate $\nabla f^0_{\nu}(x_k)$  and $\nabla f^c_{\nu}(x_k)$ we can use $G^0(x_k,\nu)$ defined by (\ref{G_estimator}), and $G^c(x_k,\nu)$ defined similarly but with $F^c(x_k,\xi_k) = \max_i F^i(x_k,\xi_k^i)$ instead of $F^i(x_k,\xi_k^i)$. 
However, to estimate the denominator we propose to use a lower confidence bound on $-f^c_{\nu}(x_k)$, constructed as follows. 
Given $\de>0$ for $i=1,\ldots,m$ define $\hat F^i(x_k):= \frac{\sum_{j = 1}^{n_k} F^i(x_k,\xi^{i-}_{jk})}{n_k} + \frac{\sigma}{\sqrt{n_k}} \sqrt{\ln \frac{1}{\de}}.$  We show in Appendix \ref{proof:A} that
$\Prob\{f^i(x_k) \leq \hat F^i(x_k) \}\geq 1-\de.$ %represents an upper confidence bound on $f^i(x_k)$. % with confidence level $1-\bar \de\in(0,1)$. 
We define by $\hat F^c_{\nu}(x_k) := \max_{i=1,\ldots,m}\hat F^i(x_k) + \nu L$ an upper confidence bound on $f^c_{\nu}(x_k)$ and by $\hat \alpha_k:= |\hat F^c_{\nu}(x_k)|$ a lower confidence bound on both $|f^c(x_k)|$ and $|f(x_k)|.$ More precisely,  
$\Prob\left\{\hat \alpha_k \leq \min\{|f^c_{\nu}(x_k)|, |f^c(x_k)|\}\right\}\geq 1-\de.$ The proofs of the above statements are shown in Appendix \ref{proof:A}. 
Then, we propose to estimate $\nabla B_{\eta,\nu}(x_k)$ by 
\begin{align}\label{g_estimator}
g_k :=  G^0(x_k, \nu)  + \eta\frac{G^c(x_k, \nu)}{\hat \alpha_k}.
\end{align}
Later in Fact \ref{lemma:2}, we bound the deviation $\|g_k - \nabla B_{\eta,\nu}(x_k)\|$ with high probability.   Next, to define our algorithm we need to make a second assumption.
\begin{assumption}\label{assumption:2}
	Let $D'\subseteq D$ be the subset defined by
	$D'=\left\{x\in \R^d: f^c_{\nu}(x) + \eta 
	\leq 0\right\}.$  There exists $l>0$ such that the norm of the gradient %of the constraint function 
	$\nabla f^c_{\nu}(x)$ is lower bounded on 
	$D\setminus D'$ by $l$, i.e, $0 < l \leq \|\nabla f^c_{\nu}(x)\|\leq L.$
	\end{assumption}
	Assumption 2 is needed to demonstrate that close to the boundary of the constraint set the term in the barrier gradient related to constraints becomes large enough to push the step direction away from the boundary back to the feasible set. 
	%This assumption is the consequence of the stronger assumption, which is in fact,
	A slightly modified Mangasarian-Fromovitz Constraint Qualification (MFCQ) that holds for all points leads to the satisfaction of Assumption \ref{assumption:2}. We show this in Appendix \ref{proof:assumption}. %An example when it holds can be a polytopic bounded set, then the smoothed  constrained function is the smoothed pies-wise linear function. Such a function has bounded gradient norm from below on the boundary.
	
The proposed stochastic zero-th order algorithm is defined in Algorithm \ref{safe-barrier-0} below:
\begin{algorithm}[h]
	\caption{Stochastic Zero-th Order  Logarithmic Barrier Method \textbf{(ZeLoBa)}} \label{safe-barrier-0}
	\small
	\begin{algorithmic}[1]
		\BState \emph{Input:} $x_0 \in D$, number of iterations $K$, $\eta > 0$
		, $L,l > 0$, $C =\frac{l^2}{8L^2}$, $\nu = \frac{C\eta}{L}$, $\{n_k\}_{k = 1}^K$ defined in Lemma \ref{lemma:3}
		\State\While {$k \leq K$}{
% 		\State Take $\nu =
% 		\frac{C\eta}{L}$ 
		\State Sample $n_k$ vectors $s_{kj}, j = 1,\ldots,n_k$ independently from the uniform distribution on $\mathbb S^d$;
		\State Take $n_k$ noisy measurements of each function $f^i(x),~ i = 0,\ldots,m$ at points $F^i(x_k, \xi^{i-}_{kj}), F^i(x_k + \nu s_{kj}, \xi^{i+}_{kj})$;
		\State Compute an estimator $g_k$ of  $\nabla B_{\eta,\nu}(x_k)$ using (\ref{g_estimator});
		\State Compute $\gamma_k = \frac{1}{\|g_k\|}
		\min\left\{\frac{\hat \alpha_k}{2Lk^{2/5}},\frac{1}{k^{3/5}}\right\}
		$;
		\State $x_{k+1} = x_{k} - \gamma_k g_k$;
	}
         \State Sample $R$ from a discrete random distribution $\Prob\{ R  = k\}=  \frac{\gamma_k\|g_k\|}{\sum_{k=1}^K \gamma_k \|g_k\|}$
         \State $\lambda_{R} =\frac{\eta}{\hat\alpha_{R}}$
	    \BState \emph{Output:} $x_R, \lambda_R$
	\end{algorithmic}
\end{algorithm}\\
% Constant $C = \frac{F^2}{8L^2+ 4F^2}$ describes the constant such that  all iterates $x_k$ of ZeLoBa  keep distance from the boundary with high probability, i.e.,  $\hat\alpha_k \geq C\eta.$ We will prove this in the next section.
In the above, $l$ is the constant defined in Assumption \ref{assumption:2}.  
Our algorithm is defined for fixed $\eta$. In practice interior point methods often use decreasing $\eta$. Our algorithm can be used for inner iterations of the classical log barrier method with decreasing $\eta$. % of the classical barrier method.} %for convex functions it has shown better convergence properties for interior point methods. 
%This question is interesting to explore in future.}

\section{Safety and convergence analysis}
From the algorithm we require the safety of the iterates, $f^c(x_k) \leq 0$, and the safety of the measurements, $f^c(x_k+ \nu s_{kj}) \leq 0$, with high probability. 
Also, we require convergence to a stationary point of the smoothed function in expectation. Here, we show that these properties hold for ZeLoBa algorithm.

\subsection{Safety.}\label{sec:safety}
% \textcolor{blue}{Since the idea of our algorithm is based on smoothed approximation $B_{\nu,\eta}(x_k)$ of the log barrier, we require the smoothing parameter $\nu$ (which is also the sampling radius) to be constant during the run of the algorithm. }
Given the required accuracy $\eta$, the smoothing parameter $\nu$ (which is also the sampling radius) has %to be chosen in advance and 
to be conservative enough to guarantee constraint satisfaction  at any measured point $x_k + \nu s_{kj}$ of ZeLoBa algorithm. %, where $\|s_{kj}\| = 1.$
 Thus, we need to show that the iterates $x_k$ always keep a sufficient distance $\Lambda>0$ from the boundary, namely $-f^c_{\nu}(x_k)\geq\hat\alpha_k \geq \Lambda.$  %We also show how this bound $\Lambda$ depends on $\eta$. %: $\Lambda = C\eta.$}
To show the above, we first need to bound the deviation of $g_k$ from $\nabla B_{\eta,\nu}(x_k)$. 
Define the deviation by $\zeta_k := g_k - \nabla B_{\eta,\nu}(x_k).$  The deviation $\zeta_k$ is dependent on deviations $\Delta^0_k := G^0(x_k,\nu) - \nabla f^0_{\nu}(x_k),\Delta^c_k := G^c(x_k,\nu) - \nabla f^c_{\nu}(x_k)$, thus, we bound these latter terms first.  
We denote $\Sigma:= (d+1)\sqrt{\ln\frac{1}{\de} + \ln (2K+1)}\left(\sqrt{2}\sigma + L \nu\right).$  From the sub-Gaussian property of the noise $\xi^{i\pm}_{kj}$ and %the boundness of the deviation $\E_{\xi^i_k} G^i(x_k,\nu) - \nabla f^i_{\nu}(x_k)$, which on turn follows from 
$L$-Lipschitz continuity of $f^i(x), i = 0,\ldots,m$, we have:
% Using the above properties we can bound the deviation from the estimator $G^i(x_k,\nu)$ and $\nabla f_{\nu}(x_k):$
\begin{fact}\label{lemma:0}
 For deviations $\Delta^j_k = G^j(x_k,\nu) - \nabla f^j_{\nu}(x_k), j =\{0,c\}$, % := G^i(x_k,\nu) - \nabla f^i_{\nu}(x_k)$
 we have 
% \begin{align*}
$\E\|\Delta^j_k\|^2 
\leq 
 \frac{d^2}{n_k}\left(L^2 + \frac{2\sigma^2}{\nu^2}
\right).$\\
For all points $x_k$ with $k \leq K$ 
  we have 
$\Prob\bigg\{\forall k =1,\ldots,K ~ \|\Delta_k^j\|\leq \frac{\Sigma}{\nu\sqrt{n_k}}\bigg\} \geq 1-\de.$
\end{fact}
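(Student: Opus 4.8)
The statement bundles two claims, and I would prove them separately: the second-moment bound is routine, while the uniform high-probability bound carries the real work.

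For the second-moment bound I would first write $\Delta^j_k = \frac{1}{n_k}\sum_{t=1}^{n_k}\bigl(\hat G^j_t(x_k,\nu) - \nabla f^j_\nu(x_k)\bigr)$ as an average of $n_k$ i.i.d. vectors that are zero-mean by the unbiasedness $\E\hat G^j_t = \nabla f^j_\nu(x_k)$ noted after (\ref{G_estimator}). Independence kills the cross terms, so $\E\|\Delta^j_k\|^2 = \frac{1}{n_k}\E\|\hat G^j_1 - \nabla f^j_\nu(x_k)\|^2 \le \frac{1}{n_k}\E\|\hat G^j_1\|^2$, the last inequality because subtracting the mean only decreases the second moment. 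Since $\|s\|=1$, $\|\hat G^j_1\|^2 = \frac{d^2}{\nu^2}\bigl(F^j(x_k+\nu s,\xi^+) - F^j(x_k,\xi^-)\bigr)^2$, and expanding the square with $F^j(\cdot)=f^j(\cdot)+\text{noise}$ and using that the noise is zero-mean and independent across the two evaluations makes the cross term vanish; I am left with the $L$-Lipschitz signal increment, bounded by $(L\nu)^2$, plus the noise variance, bounded by $2\sigma^2$. This yields $\E\|\hat G^j_1\|^2 \le \frac{d^2}{\nu^2}(L^2\nu^2 + 2\sigma^2)$ and hence the claimed $\frac{d^2}{n_k}(L^2 + \frac{2\sigma^2}{\nu^2})$.

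For the high-probability bound I would split each centered summand into a bounded ``signal'' part and a sub-Gaussian ``noise'' part, $\hat G^j_t - \nabla f^j_\nu(x_k) = \bigl(\tfrac{d}{\nu}(f^j(x_k+\nu s_{kt})-f^j(x_k))s_{kt} - \nabla f^j_\nu(x_k)\bigr) + \tfrac{d}{\nu}(\xi^{j+}_{kt}-\xi^{j-}_{kt})s_{kt}$. By $L$-Lipschitzness and $\|s_{kt}\|=1$ the first vector has norm at most $dL$, and since $\|\nabla f^j_\nu(x_k)\|\le L$ (Property 3) the whole signal term is bounded in norm by $(d+1)L$; the noise term has norm $\tfrac{d}{\nu}|\xi^{j+}_{kt}-\xi^{j-}_{kt}|$ with $\xi^{j+}_{kt}-\xi^{j-}_{kt}$ being $2\sigma^2$-sub-Gaussian, i.e. a norm-sub-Gaussian vector of scale $\tfrac{d}{\nu}\sqrt2\sigma$. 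I would then apply a concentration inequality for the norm of an average of independent zero-mean random vectors with bounded/sub-Gaussian norm (a vector Azuma--Hoeffding / norm-sub-Gaussian bound): the bounded part contributes a deviation scale $\tfrac{(d+1)L}{\sqrt{n_k}}$ and the noise part $\tfrac{d}{\nu}\sqrt2\sigma\cdot\tfrac{1}{\sqrt{n_k}}$, each carrying a tail factor $\sqrt{2\ln(1/p)}$ for per-event failure probability $p$. Collecting the two contributions into the single scale $\tfrac{(d+1)(\sqrt2\sigma + L\nu)}{\nu}$ (using $\tfrac{d}{\nu}\sqrt2\sigma \le (d+1)\tfrac{\sqrt2\sigma}{\nu}$) and allocating $p = \de/(2K+1)$ across the relevant concentration events (the gradient-deviation events over the $K$ iterations for both $j\in\{0,c\}$, together with the value-confidence event) produces the factor $\sqrt{\ln\tfrac1\de + \ln(2K+1)}$; a final union bound over $k=1,\dots,K$ delivers the uniform statement $\|\Delta^j_k\| \le \frac{\Sigma}{\nu\sqrt{n_k}}$.

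I expect the vector concentration step to be the main obstacle: unlike the scalar case, I must control the Euclidean norm of a sum of random vectors that are neither bounded (because of the noise) nor purely sub-Gaussian (because of the bounded Lipschitz increment), and I must extract the correct linear-in-$d$ prefactor rather than the $\sqrt d$ one would naively obtain from a covering argument — here the linear factor comes for free from the explicit $d$ already present in $\hat G^j_t$. A secondary subtlety is the case $j=c$: I would use that $f^c=\max_i f^i$ is again $L$-Lipschitz and that $F^c(x_k+\nu s,\xi^+)-F^c(x_k,\xi^-)$ differs from the signal increment by at most $\max_i|\xi^{i+}-\xi^{i-}|$, so the maximum over constraints must be controlled when bounding both the variance and the sub-Gaussian norm of its noise term.
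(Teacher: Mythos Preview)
Your approach mirrors the paper's: the same signal/noise split $\hat G^j_t - \nabla f^j_\nu(x_k) = v_t^j + u_t^j$, the same bound $\|v_t^j\|\le (d+1)L$ via $L$-Lipschitzness and Property~3, the same sub-Gaussian concentration for the norm, and the same union bound with per-event failure $\bar\de = \de/(2K+1)$. For $j=0$ your variance computation is in fact a bit cleaner than the paper's, since it avoids expanding the cross term at all.

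The one genuine gap is in your treatment of $j=c$ in the variance step. You invoke ``unbiasedness $\E\hat G^j_t=\nabla f^j_\nu$ noted after (\ref{G_estimator})'' and then ``noise is zero-mean $\Rightarrow$ cross term vanishes'', but the paper only asserts unbiasedness for $i=0,\dots,m$, not for $c$. The effective noise $\xi^{c\pm}_{kl}:=\max_i(f^i+\xi^{i\pm})-\max_i f^i$ is \emph{not} zero-mean (Jensen for $\max$), so the signal--noise cross term does \emph{not} vanish for $j=c$. The paper does not claim it does; instead it uses $|\xi^{c\pm}_{kl}|\le\max_i|\xi^{i\pm}_{kl}|$ to get $\E|\xi^{c\pm}_{kl}|^2\le\sigma^2$, and then bounds the cross contribution $\E\la v_l^c,u_l^c\ra$ directly by $4d^2\sigma L/\nu$ per summand, absorbing it into a final $(d+1)^2(L+\sqrt2\sigma/\nu)^2/n_k$ bound (which is slightly looser than the constant in the Fact as stated). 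You correctly flag the $j=c$ subtlety in your last paragraph, but the fix must be applied already at the variance step, not only at the concentration step: drop the zero-mean claim for $\xi^{c\pm}$ and bound the cross term as the paper does.
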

For the proof see Appendix \ref{proof:B}. Using this result, we can get the following bound on $\zeta_k$:
\begin{fact}\label{lemma:2}
For deviation $\zeta_k = g_k - \nabla B_{\eta,\nu}(x_k)$, we have $\E\|\zeta_k\| \leq  \frac{(d+1)(\sqrt{2}\sigma+L\nu)}{\nu\sqrt{n_k}}\left(1+\frac{2 \eta}{\hat\alpha_k}\right).$\\
For all $k\leq K$ we have
$\Prob\left\{\forall k =1,\ldots,K ~\|\zeta_k\| \leq  \frac{\Sigma}{\nu\sqrt{n_k}}\left(1+\frac{2 \eta}{\hat\alpha_k}\right)\right\}\geq 1-\de.$
\end{fact}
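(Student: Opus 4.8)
The plan is to express the deviation $\zeta_k = g_k - \nabla B_{\eta,\nu}(x_k)$ directly in terms of the two gradient-estimator deviations $\Delta^0_k$ and $\Delta^c_k$ controlled in Fact~\ref{lemma:0}, and then bound it term by term. Writing $g_k = G^0(x_k,\nu) + \eta G^c(x_k,\nu)/\hat\alpha_k$ and $\nabla B_{\eta,\nu}(x_k) = \nabla f^0_\nu(x_k) + \eta\,\nabla f^c_\nu(x_k)/(-f^c_\nu(x_k))$, I would subtract and split into an objective part and a constraint/barrier part:
\begin{align*}
\zeta_k = \Delta^0_k + \eta\left(\frac{G^c(x_k,\nu)}{\hat\alpha_k} - \frac{\nabla f^c_\nu(x_k)}{-f^c_\nu(x_k)}\right).
\end{align*}
The objective part contributes $\|\Delta^0_k\|$, which is handled immediately by Fact~\ref{lemma:0}. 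The main work is the barrier term, where the estimator uses the \emph{lower confidence bound} $\hat\alpha_k$ in the denominator rather than the true $-f^c_\nu(x_k)$, so two sources of error must be separated: the numerator error $G^c - \nabla f^c_\nu = \Delta^c_k$ and the denominator mismatch between $\hat\alpha_k$ and $-f^c_\nu(x_k)$.

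The natural way to handle this is to add and subtract $\nabla f^c_\nu(x_k)/\hat\alpha_k$ inside the parenthesis, giving
\begin{align*}
\frac{G^c(x_k,\nu)}{\hat\alpha_k} - \frac{\nabla f^c_\nu(x_k)}{-f^c_\nu(x_k)} = \frac{\Delta^c_k}{\hat\alpha_k} + \nabla f^c_\nu(x_k)\left(\frac{1}{\hat\alpha_k} - \frac{1}{-f^c_\nu(x_k)}\right).
\end{align*}
The first piece is bounded by $\|\Delta^c_k\|/\hat\alpha_k$. For the second piece I would use property~3 of Section~\ref{sec:oracle}, namely $\|\nabla f^c_\nu(x_k)\|\le L$, together with the high-probability event $\hat\alpha_k \le |f^c_\nu(x_k)| = -f^c_\nu(x_k)$ established in Section~\ref{sec:barrier_estimator}. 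On that event $1/\hat\alpha_k - 1/(-f^c_\nu(x_k)) \ge 0$ and is at most $1/\hat\alpha_k$, so the whole denominator-mismatch term is bounded by $L/\hat\alpha_k$. I anticipate this step is the crux: the bound $\|\zeta_k\| \le \|\Delta^0_k\| + \eta(\|\Delta^c_k\| + L)/\hat\alpha_k$ must then be reconciled with the claimed factor $(1 + 2\eta/\hat\alpha_k)$, which suggests the intended grouping is $\|\Delta^0_k\| + (\eta/\hat\alpha_k)\|\Delta^c_k\|$ for the estimator-error part (each $\le \Sigma/(\nu\sqrt{n_k})$ by Fact~\ref{lemma:0}, contributing the $1 + \eta/\hat\alpha_k$), with the remaining $\eta L/\hat\alpha_k$ absorbed into the second $\eta/\hat\alpha_k$ by comparison against the $\Sigma/(\nu\sqrt{n_k})$ scale; getting this bookkeeping to land exactly on the stated constant is the part most likely to require care.

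For the two conclusions I would proceed as follows. The expectation bound follows by taking $\E$ through the term-by-term estimate and applying the $\E\|\Delta^j_k\|^2$ bound of Fact~\ref{lemma:0} via Jensen's inequality (so $\E\|\Delta^j_k\| \le \frac{d}{\sqrt{n_k}}\sqrt{L^2 + 2\sigma^2/\nu^2}$), matching the $(d+1)(\sqrt2\,\sigma + L\nu)/(\nu\sqrt{n_k})$ prefactor after bounding $\sqrt{L^2\nu^2 + 2\sigma^2} \le \sqrt2\,\sigma + L\nu$ and folding the objective and constraint contributions into the factor $1 + 2\eta/\hat\alpha_k$. The high-probability bound follows on the intersection of the event from Fact~\ref{lemma:0} (each $\|\Delta^j_k\| \le \Sigma/(\nu\sqrt{n_k})$, simultaneously over all $k$) with the confidence event $\hat\alpha_k \le -f^c_\nu(x_k)$; since both hold with probability at least $1-\de$ (and the union bound over $k$ is already baked into the definition of $\Sigma$ through the $\ln(2K+1)$ term), substituting the deterministic bounds on that event yields the stated uniform inequality.
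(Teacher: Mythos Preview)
Your decomposition of $\zeta_k$ into $\|\Delta^0_k\| + \eta\|\Delta^c_k\|/\hat\alpha_k$ plus a denominator-mismatch term is exactly the paper's route. The gap is in how you control that mismatch term. You bound
\[
\left|\frac{1}{\hat\alpha_k}-\frac{1}{-f^c_\nu(x_k)}\right|\le \frac{1}{\hat\alpha_k},
\]
which yields a contribution $\eta L/\hat\alpha_k$. This term carries \emph{no} factor of $1/\sqrt{n_k}$, so it cannot be ``absorbed into the second $\eta/\hat\alpha_k$ by comparison against the $\Sigma/(\nu\sqrt{n_k})$ scale'' as you suggest: in the regime of interest $n_k$ is chosen large precisely so that $\Sigma/(\nu\sqrt{n_k})$ is small (of order a constant times $L$ or less), and $\eta L/\hat\alpha_k$ can be of order $L/C$, which dominates the target bound rather than being absorbed by it. Your anticipated ``bookkeeping'' step therefore does not close.

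The missing idea is that the confidence-bound construction gives a \emph{two-sided} control: not only $\hat\alpha_k\le -f^c_\nu(x_k)$, but also $(-f^c_\nu(x_k))-\hat\alpha_k$ is at most the width of the confidence interval, i.e.\ of order $\sigma\sqrt{\ln(1/\bar\de)}/\sqrt{n_k}+\nu L$. Using this in
\[
\left|\frac{1}{\hat\alpha_k}-\frac{1}{-f^c_\nu(x_k)}\right|
= \frac{(-f^c_\nu(x_k))-\hat\alpha_k}{\hat\alpha_k\,(-f^c_\nu(x_k))}
\le \frac{(-f^c_\nu(x_k))-\hat\alpha_k}{\hat\alpha_k^{2}}
\]
gives a mismatch contribution of order $\dfrac{\eta L}{\hat\alpha_k^{2}}\cdot\dfrac{\sigma\sqrt{\ln(1/\bar\de)}}{\sqrt{n_k}}$. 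After inserting $\nu=C\eta/L$ this is bounded by $\dfrac{\Sigma}{\nu\sqrt{n_k}}\cdot\dfrac{C\eta^{2}}{\hat\alpha_k^{2}}$, and then the standing bound $\hat\alpha_k\ge C\eta$ converts $C\eta^{2}/\hat\alpha_k^{2}$ into the extra $\eta/\hat\alpha_k$, yielding the claimed factor $1+2\eta/\hat\alpha_k$. The expectation bound follows by the same computation without the $\sqrt{\ln(1/\bar\de)}$ factor.
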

For the proof see Appendix \ref{proof:C}. 
From the above facts, observe that if we keep the iterates $x_k$ away from the boundary, $\hat\alpha_k \geq \Lambda > 0$, we can bound  the deviation $\zeta_k$. Luckily, the Log Barrier gradient  approach with sufficiently large number of measurements in ZeLoBa ensures this property, %as well as ZeLoBa algorithm with large enough number of measurements. 
as shown in the following lemma.

\begin{lemma}\label{lemma:3}
Under Assumption \ref{assumption:2}, 
if the initial point satisfies $- f^c_{\nu}(x_0) \geq 2C\eta$ with $C=\frac{l^2}{8L^2}$, then for all iterates $x_k$ of ZeLoBa algorithm with $n_k  \geq \frac{4\Sigma^2 (C+1)^2}{\nu^2C^2L^2}$  we have 
$\Prob\{ \hat\alpha_k \geq C \eta ~\forall k \leq K\}\geq 1-\de.$  
	\end{lemma}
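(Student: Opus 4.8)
The plan is to prove the high-probability lower bound $\hat\alpha_k \geq C\eta$ by induction on $k$, tracking the evolution of the quantity $-f^c_\nu(x_k)$ (the true distance of the iterate to the boundary) across one step of the algorithm. The base case follows from the hypothesis $-f^c_\nu(x_0)\geq 2C\eta$, which comfortably exceeds the target $C\eta$. For the inductive step, I would condition on the event from Fact \ref{lemma:2} that $\|\zeta_k\|\leq \frac{\Sigma}{\nu\sqrt{n_k}}\left(1+\frac{2\eta}{\hat\alpha_k}\right)$ holds uniformly for all $k\leq K$; this event has probability at least $1-\de$, which is exactly the probability appearing in the conclusion, so all the error budget is spent here and the rest of the argument can proceed deterministically on this event.

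The core of the argument is a case analysis on how close the current iterate is to the boundary. First I would use a first-order expansion (local smoothness of $B_{\eta,\nu}$ via $M_{2,\nu}$ from the statement, or more directly a Lipschitz bound on $f^c_\nu$ since it is $L$-Lipschitz) to write
\begin{equation}
-f^c_\nu(x_{k+1}) \geq -f^c_\nu(x_k) - L\gamma_k\|g_k\| \geq -f^c_\nu(x_k) - L\cdot\tfrac{\hat\alpha_k}{2Lk^{2/5}},
\end{equation}
where the second inequality uses the definition $\gamma_k\|g_k\| = \min\{\frac{\hat\alpha_k}{2Lk^{2/5}},\frac{1}{k^{3/5}}\}\leq \frac{\hat\alpha_k}{2Lk^{2/5}}$. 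This shows the step can decrease the margin by at most a controlled fraction of $\hat\alpha_k$. When the iterate is \emph{deep} inside the feasible region (say $-f^c_\nu(x_k)\geq 2C\eta$), this bound alone guarantees we cannot cross below $C\eta$ in a single step. The more delicate \emph{near-boundary} case, where $C\eta \leq -f^c_\nu(x_k) < 2C\eta$, is where Assumption \ref{assumption:2} must do its work: here $\|\nabla f^c_\nu(x_k)\|\geq l$, so the barrier gradient $\nabla B_{\eta,\nu}(x_k)=\nabla f^0_\nu(x_k)+\eta\frac{\nabla f^c_\nu(x_k)}{-f^c_\nu(x_k)}$ has a large component $\frac{\eta l}{-f^c_\nu(x_k)}$ pointing \emph{into} the feasible set, and this component dominates the bounded objective gradient ($\|\nabla f^0_\nu\|\leq L$) precisely because of the choice $C=\frac{l^2}{8L^2}$ and $\nu=\frac{C\eta}{L}$. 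I would show that the projection of the step $-\gamma_k g_k$ onto $-\nabla f^c_\nu(x_k)/\|\nabla f^c_\nu(x_k)\|$ is nonnegative, so the iterate moves away from (or at worst parallel to) the boundary and the margin does not decrease.

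The main obstacle will be the near-boundary case: one must show that the \emph{estimated} step direction $g_k$, not the true gradient $\nabla B_{\eta,\nu}(x_k)$, still points away from the boundary. This is where the deviation bound on $\|\zeta_k\|$ and the lower bound on $n_k$ enter. The condition $n_k\geq \frac{4\Sigma^2(C+1)^2}{\nu^2 C^2 L^2}$ should be calibrated so that $\frac{\Sigma}{\nu\sqrt{n_k}}(1+\frac{2\eta}{\hat\alpha_k})$ is small relative to the restoring term $\frac{\eta l}{-f^c_\nu(x_k)}\sim \frac{l}{C}$; concretely, substituting $\hat\alpha_k\geq C\eta$ bounds $1+\frac{2\eta}{\hat\alpha_k}\leq 1+\frac{2}{C}\leq \frac{C+1}{C}\cdot(\text{const})$, and the $n_k$ bound forces $\|\zeta_k\|\leq \frac{CL}{2(C+1)}\cdot(\ldots)$, guaranteeing the error is smaller than the margin of the restoring direction. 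I would carefully verify that, even after perturbing the true gradient by $\zeta_k$, the net inner product of the step with the inward normal remains nonnegative, which closes the induction and yields $-f^c_\nu(x_{k+1})\geq C\eta$, hence $\hat\alpha_{k+1}\geq C\eta$ on the same high-probability event.
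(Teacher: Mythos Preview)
Your overall architecture---induction on $k$, a case split by distance to the boundary, and the use of Assumption~\ref{assumption:2} together with the $n_k$ bound to control $\zeta_k$ in the near-boundary regime---is exactly the paper's strategy. However, two concrete steps in your execution would not go through as written.

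\textbf{The inductive invariant is too weak, and the final implication is in the wrong direction.} You propose to maintain $-f^c_\nu(x_k)\geq C\eta$ and then conclude ``hence $\hat\alpha_{k+1}\geq C\eta$''. But $\hat\alpha_k$ is a \emph{lower} confidence bound on $-f^c_\nu(x_k)$, i.e., $\hat\alpha_k \leq -f^c_\nu(x_k)$ with high probability; the inequality $-f^c_\nu(x_{k+1})\geq C\eta$ tells you nothing about $\hat\alpha_{k+1}$ from below. The paper fixes this by maintaining the stronger invariant $-f^c_\nu(x_k)\geq 2C\eta$ throughout and splitting the cases at $4C\eta$ rather than $2C\eta$. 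The choice of $n_k$ is precisely what makes the confidence width at most $C\eta$ (so that $\hat\alpha_k \geq -f^c_\nu(x_k)-C\eta \geq C\eta$), and this deduction is itself a separate high-probability event, not a deterministic consequence of the Fact~\ref{lemma:2} event you condition on. So your probability accounting (spend all of $\delta$ on Fact~\ref{lemma:2}, then argue deterministically) is also incomplete: you still need the concentration event $\hat\alpha_k \approx -f^c_\nu(x_k)$ at every step, which is why the paper works with $\bar\delta=\delta/(2K+1)$ and chains conditional probabilities.

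\textbf{The near-boundary argument is only first order.} You write that once $\langle -g_k,\, -\nabla f^c_\nu(x_k)\rangle\geq 0$ the ``margin does not decrease''. That would be true if the level sets of $f^c_\nu$ were hyperplanes, but they are not; a step tangent to the level set can still increase $f^c_\nu$ by curvature. The paper handles this by invoking the $M_\nu$-smoothness of $f^c_\nu$ to bound the quadratic remainder,
\[
f^c_\nu(x_{k+1})\leq f^c_\nu(x_k)-\gamma_k\langle g_k,\nabla f^c_\nu(x_k)\rangle+\tfrac{M_\nu}{2}\gamma_k^2\|g_k\|^2,
\]
and then shows that the linear term (of order $l^2/C=8L^2$) dominates the quadratic term once the step size $\gamma_k$ is small enough; this is where the $k^{-2/5}$ scaling in $\gamma_k$ and the constraint $k\geq (\sqrt{d}(2+1/C))^{5/2}$ enter. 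Your sketch omits this second-order control entirely.
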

\paragraph{Proof sketch:}
% The proof is based on showing 
The idea is to show that the satisfaction of $\hat \alpha_k\geq C \eta$ and $-f^c_{\nu}(x_k)\geq 2C\eta$ for iteration $k$ implies the same bounds for the next iteration $k+1$ with high probability. To prove this,
we divide the condition $-f^c_{\nu}(x_k)\geq 2C\eta$ into two following cases. 
\textbf{Case 1.} $-f^c_{\nu}(x_k)\geq 4C\eta$, i.e., $x_k$ is far from the boundary of the constraint set. Then, in the next iteration $-f^c_{\nu}(x)$ cannot decrease more than twice due to the choice of the step size and $L$-Lipschitz continuity of $-f^c_{\nu}(x)$. Thus, for $x_{k+1}$ the bound $-f^c_{\nu}(x_{k+1})\geq 2C\eta$ holds.   
\textbf{Case 2.}  $-f^c_{\nu}(x_k) \leq 4C\eta$, i.e., $x_k$ is close enough to the boundary. In this case, we show that $-g_k$ pushes $x_{k+1}$ away from the boundary. That is, $-g_k$ is the descent direction for $f^c_{\nu}$: $\la g_k, \nabla f^c_{\nu}(x_k)\ra\geq 0.$ 
This is because $g_k$ defined in (\ref{g_estimator}) can be expressed as a sum of $\frac{\eta}{\hat \alpha_k}\nabla f^c_{\nu}(x_k) $ and $\nabla f^0_{\nu}(x_k) + \zeta_k$, and the first term will be dominating. Indeed, close to the boundary the factor $\frac{\eta}{\hat\alpha_k}$ is large and  $\|\nabla f^c_{\nu}(x_k)\|$ is lower bounded $\|\nabla f^c_{\nu}(x_k)\| \geq l > 0$ due to Assumption \ref{assumption:2}. 
%and 
%Using Fact \ref{lemma:3} and $L$-Lipschitz continuity of $f^0_{\nu}(x)$, for $n_k$ large enough, we can upper bound $\|\zeta_k +\nabla f^0_{\nu}(x_k)\| $ %\leq 1.5L$
%with high probability. 
%  To show that, we lower bound the norm %$\|\nabla f^c_{\nu}(x_k)\|$ can be bounded away from $0$: 
%  Then,  %$\frac{\eta}{\hat\alpha_k}\geq \frac{1}{4C}$
%  to guarantee 
%   $\la g_k, \nabla f^c_{\nu}(x_k)\ra\geq 0$. %with probability $1-\frac{\de}{2K+1}$.
% then the step direction $-g_k$ pushes the method away from the boundary, i.e., $-g_k$ is a descent direction of $f^c_{\nu}(x).$ %The required lower bound on the coefficient is , that we show in the proof in Appendix \ref{proof:D}. 
Moreover, the step size $\gamma_k$ is sufficiently small to guarantee that $f^c_{\nu}(x_{k+1})$ will not increase compared to $f^c_{\nu}(x_k)$ due to the $M_{\nu}$-smoothness. Consequently,  $ %\Prob\{
	-f^c_{\nu}(x_{k+1}) \geq 2C\eta
	$ holds for both cases. 
	This  implies  $%\Prob\{
	\hat\alpha_{k+1} \geq C\eta%\}\geq 1-\frac{\de}{2K+1}.
	$ with high probability. Everything above holds conditioned on the previous iteration $k$. Carefully combining the conditional probabilities along $k = 1,\ldots,K$, we get the result of the lemma. The full proof can be found in Appendix \ref{proof:D}.
	
	The above lemma implies that the sampling radius $\nu = \frac{C\eta}{L} \leq \frac{\hat \alpha_k}{L}$ is safe. Hence, our algorithm is safe:
\begin{proposition}\label{proposition} Let Assumptions 1,2 hold and $n_k \geq  \frac{4\Sigma^2 (C+1)^2}{\nu^2C^2L^2}.$  Then all iterations $x_k$ and measurement points $x_k + \nu s_{kj}$ generated by ZeLoBa algorithm are safe, %with the step size $\gamma_k \leq \frac{\hat\alpha_k}{2L\|g_k\|}$ %given $f^c(x_k) < 0$ 
namely, 
$ \Prob\{f^c(x_{k}) \leq 0 ~ \forall k\leq K \} \geq 1 - \de$ 
and $\Prob\{f^c (x_{k}+\nu s_{k,j}) \leq 0~ \forall k\leq K \}\geq 1 - \de.$ 
%It follows that since the $x_0$ is feasible due to Assumption 1, then all the points $x_k$ and $x_k + \nu s_{kj}$ are feasible with probability $1-\de$. 
\end{proposition}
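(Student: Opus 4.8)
The plan is to derive safety of both the iterates and the measurement points as a \emph{deterministic} consequence of the good event already supplied by Lemma~\ref{lemma:3}, so that its probability budget $1-\de$ transfers to each of the two claims without any further union bound. Let $\mathcal{E}$ be the event on which, for all $k\leq K$ simultaneously, (i) $\hat\alpha_k \geq C\eta$ and (ii) the lower confidence bound is valid, i.e. $\hat F^c_\nu(x_k)<0$ and hence $f^c(x_k) \leq -\hat\alpha_k$. Part (i) is exactly Lemma~\ref{lemma:3}, and part (ii) is the confidence statement $\Prob\{\hat\alpha_k \leq |f^c(x_k)|\}\geq 1-\de$ established for $\hat\alpha_k = |\hat F^c_\nu(x_k)|$ in Section~\ref{sec:barrier_estimator}. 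Since both are certified by the \emph{same} underlying confidence event built from the bounds $\hat F^i(x_k)$ on $f^i(x_k)$---whose uniform validity over all $i$ and all $k\leq K$ is what the factor $\ln(2K+1)$ inside $\Sigma$ pays for---we have $\Prob(\mathcal{E})\geq 1-\de$.

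On $\mathcal{E}$, safety of the iterates is immediate: combining $f^c(x_k)\leq -\hat\alpha_k$ with $\hat\alpha_k \geq C\eta>0$ gives $f^c(x_k)\leq -C\eta<0$ for every $k\leq K$. Thus $\mathcal{E}\subseteq\{f^c(x_k)\leq 0~\forall k\leq K\}$, which yields the first bound.

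For the measurement points I would use that $f^c=\max_i f^i$ is $L$-Lipschitz, being a maximum of $L$-Lipschitz functions. For any sampled $s_{kj}\in\mathbb{S}^d$ we have $\|x_k+\nu s_{kj}-x_k\|=\nu$, so $f^c(x_k+\nu s_{kj})\leq f^c(x_k)+L\nu\leq -\hat\alpha_k+L\nu$ on $\mathcal{E}$. The decisive inequality is the one noted just before the proposition: since $\nu=\frac{C\eta}{L}$ and $\hat\alpha_k\geq C\eta$, the sampling radius satisfies $L\nu=C\eta\leq\hat\alpha_k$, i.e. it never exceeds the certified distance of $x_k$ to the boundary. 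Substituting gives $f^c(x_k+\nu s_{kj})\leq -\hat\alpha_k+\hat\alpha_k=0$ for every $j$ and every $k\leq K$, so $\mathcal{E}$ also implies the second event and the second bound follows.

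The main obstacle is not the Lipschitz estimate, which is routine, but the probabilistic bookkeeping in the first paragraph: one must ensure that Lemma~\ref{lemma:3}'s lower bound $\hat\alpha_k\geq C\eta$ and the upper confidence bound $\hat\alpha_k\leq|f^c(x_k)|$ hold \emph{jointly and uniformly in $k$} inside a single event of probability $\geq 1-\de$, rather than as two separate $1-\de$ events whose intersection could only be guaranteed with probability $1-2\de$. This is exactly why $\Sigma$ carries the $\ln(2K+1)$ term; once the good event is fixed, safety of both the iterates and the $\nu$-perturbed samples reduces to the one-line $L$-Lipschitz computation above.
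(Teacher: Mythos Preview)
Your proof is correct and in fact more economical than the paper's. In Appendix~\ref{proof:E} the paper does \emph{not} read safety of the iterates directly off Lemma~\ref{lemma:3}; it runs a separate inductive chain: from the step size $\gamma_k\|g_k\|\leq \hat\alpha_k/(2L)$ and the confidence bound $\hat\alpha_k\leq -f^c(x_k)$ it obtains $\|x_{k+1}-x_k\|\leq -f^c(x_k)/(2L)$, uses $L$-Lipschitzness to deduce $f^c(x_{k+1})\leq \tfrac12 f^c(x_k)\leq 0$, and then chains the conditional probabilities $\Prob\{f^c(x_{k+1})\leq 0\mid f^c(x_k)\leq 0\}\geq 1-\bar\delta$ along the trajectory via the Markov property. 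For the measurement points the paper uses exactly the Lipschitz computation you give.

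What your route buys is that it avoids duplicating work already done inside Lemma~\ref{lemma:3}: that lemma's proof establishes the event $A_k^c=[-f^c_\nu(x_k)\geq 2C\eta]$ for all $k$ on the good event, and together with $|f^c-f^c_\nu|\leq \nu L=C\eta$ this already yields $f^c(x_k)\leq -C\eta$ deterministically---so you could even drop condition~(ii) and argue purely from $A_k^c$. The paper instead re-derives feasibility of the iterates from the step-size bound, essentially repeating the chaining. Both routes reach the same conclusion; yours is shorter because it recognizes the probabilistic induction has already been performed once.

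Your caution about the bookkeeping is apt and your resolution is right: in the paper's accounting the $(2K{+}1)$ failure slots paid for by $\bar\delta=\delta/(2K{+}1)$ inside $\Sigma$ cover, per step, the confidence event on $\hat F^i(x_k)$ and the deviation event on $\zeta_k$. The confidence event that validates $\hat\alpha_k\leq -f^c(x_k)$ is the \emph{same} one used in Lemma~\ref{lemma:3}'s induction step $\Prob\{A^\alpha_{k+1}\mid A^c_{k+1}\}\geq 1-\bar\delta$, so your intersection $\mathcal E$ genuinely sits inside the single event of mass $\geq 1-\delta$ and no extra $\delta$ is spent.
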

For the proof see Appendix \ref{proof:E}. 

\subsection{Convergence.}
\begin{theorem}\label{theorem:4}
Under Assumptions 1,2, for $n_k  \geq  \frac{4\Sigma^2 (1+1/C)^2}{C^2\eta^4}$ and $K = \tilde O\left(\frac{d^{5/6}}{\eta^5}\right)$ iterations of ZeLoBa algorithm we have
 	$\E \|\nabla B_{\eta,\nu}(x_R)\|
	\leq 5\eta.$
	This implies that for the pair $(x_R, \lambda_R)$ in expectation $\eta$-approximate KKT condition holds:
	\begin{align}
	\vspace{-0.3cm}
	&\Prob\{\lambda_R,-f^c_{\nu}(x_R) \geq 0\}\geq 1-\de, \tag{$\eta$-KKT.1}\\
	&\Prob\{\lambda_R(-f^c_{\nu}(x_R)) \leq 3\eta\}\geq 1-\de \tag{$\eta$-KKT.2} ,\\
	&\E \|\nabla_x L(x_R, \lambda_R)\|_2 \leq 5\eta. \tag{$\eta$-KKT.3}
	\end{align} The total number of measurements required is $N_K = n_k\cdot K = O(\frac{d^3}{\eta^9}).$
\end{theorem}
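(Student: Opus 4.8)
The plan is to follow the standard descent-lemma analysis for stochastic gradient methods on a (locally) smooth objective, here the smoothed log barrier $B_{\eta,\nu}$, while accounting for the adaptive step size $\gamma_k$ and the gradient deviation $\zeta_k$ controlled in Fact~\ref{lemma:2}. First I would invoke Lemma~\ref{lemma:3} and Proposition~\ref{proposition} to fix the event, of probability at least $1-\de$, on which every iterate and measurement is safe and $\hat\alpha_k \geq C\eta$ holds for all $k\leq K$. On this event $-f^c_\nu(x_k)$ is bounded away from zero, so the local smoothness constant $M_{2,\nu}(x_k)$ from $(\ref{l2:bound})$ is uniformly bounded, and the step size $\gamma_k = \frac{1}{\|g_k\|}\min\{\frac{\hat\alpha_k}{2Lk^{2/5}}, \frac{1}{k^{3/5}}\}$ is small enough that $x_{k+1}$ stays inside the ball on which the local smoothness bound is valid.

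Next I would write the descent inequality. Using $x_{k+1} = x_k - \gamma_k g_k$ and local $M_{2,\nu}$-smoothness,
\begin{align*}
B_{\eta,\nu}(x_{k+1}) &\leq B_{\eta,\nu}(x_k) - \gamma_k \la \nabla B_{\eta,\nu}(x_k), g_k\ra + \frac{M_{2,\nu}(x_k)}{2}\gamma_k^2\|g_k\|^2.
\end{align*}
Substituting $g_k = \nabla B_{\eta,\nu}(x_k) + \zeta_k$ splits the inner-product term into $-\gamma_k\|\nabla B_{\eta,\nu}(x_k)\|^2$ plus a cross term $-\gamma_k\la\nabla B_{\eta,\nu}(x_k),\zeta_k\ra$. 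I would take conditional expectation given $x_k$: since $\E\zeta_k$ is small (Fact~\ref{lemma:2}) rather than exactly zero, the cross term contributes a controllable bias of order $\gamma_k\|\nabla B_{\eta,\nu}(x_k)\|\,\E\|\zeta_k\|$, and the quadratic term contributes $\frac{M_{2,\nu}}{2}\gamma_k^2(\|\nabla B_{\eta,\nu}(x_k)\|^2 + \E\|\zeta_k\|^2)$. The bound $\E\|\zeta_k\| \lesssim \frac{\Sigma}{\nu\sqrt{n_k}}(1+\frac{2\eta}{\hat\alpha_k})$, combined with $\hat\alpha_k\geq C\eta$ and the prescribed $n_k \geq \frac{4\Sigma^2(1+1/C)^2}{C^2\eta^4}$, forces $\E\|\zeta_k\|$ down to order $\eta$, which is the accuracy we target.

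I would then telescope from $k=1$ to $K$. The left side telescopes to $B_{\eta,\nu}(x_1) - B_{\eta,\nu}(x_{K+1})$, bounded by $B_{\eta,\nu}(x_0) - \min B_{\eta,\nu}$, a constant independent of $\eta$ up to logarithmic factors. Rearranging gives a bound on the weighted sum $\sum_k \gamma_k\|g_k\|\cdot(\text{something comparable to }\|\nabla B_{\eta,\nu}(x_k)\|)$ in terms of the function-value gap plus the accumulated noise and smoothness error. Because $R$ is drawn with probability proportional to $\gamma_k\|g_k\|$, this weighted average is exactly $\E\|\nabla B_{\eta,\nu}(x_R)\|$ (or its square), so the telescoped inequality directly yields $\E\|\nabla B_{\eta,\nu}(x_R)\| \leq 5\eta$ once $K = \tilde O(d^{5/6}/\eta^5)$ makes the denominator $\sum_k \gamma_k\|g_k\|$ large enough to beat the numerator. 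The constant $5$ should emerge from summing the $O(\eta)$ contributions of the three error sources (objective gap term, bias term, quadratic/noise term) with a little slack.

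Finally I would translate the gradient bound into the $\eta$-KKT guarantees. Setting $\lambda_R = \eta/\hat\alpha_R \geq 0$ gives ($\eta$-KKT.1) on the safe event, since $-f^c_\nu(x_R)\geq 0$ there. For ($\eta$-KKT.2), $\lambda_R(-f^c_\nu(x_R)) = \eta\frac{-f^c_\nu(x_R)}{\hat\alpha_R}$, and since $\hat\alpha_R$ is a lower confidence bound on $|f^c_\nu(x_R)|$ with the gap controlled by $\nu L$ and the confidence term, the ratio is at most $3$ with probability $1-\de$. For ($\eta$-KKT.3), I would observe that $\nabla_x L(x_R,\lambda_R) = \nabla f^0_\nu(x_R) + \lambda_R\nabla f^c_\nu(x_R)$ coincides with $\nabla B_{\eta,\nu}(x_R)$ once $\hat\alpha_R$ is identified with $-f^c_\nu(x_R)$, so the expected-gradient bound transfers directly (up to the small discrepancy between $\hat\alpha_R$ and $-f^c_\nu(x_R)$, which is again $O(\eta)$). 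The total measurement count is then $N_K = n_k\cdot K$, and plugging $n_k = O(d^2/\eta^4)$ (from $\Sigma = O(d)$) and $K = \tilde O(d^{5/6}/\eta^5)$ gives $O(d^3/\eta^9)$ after absorbing logarithmic factors.

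The main obstacle I anticipate is controlling the local smoothness constant $M_{2,\nu}(x_k)$ cleanly inside the telescoping sum: unlike the globally smooth case, $M_{2,\nu}$ depends on the distance to the boundary through $1/(-f^c_\nu(x_k))^2$, so the quadratic error term $\frac{M_{2,\nu}}{2}\gamma_k^2\|g_k\|^2$ is only tame because the step size itself scales with $\hat\alpha_k$. Verifying that $\gamma_k$ is simultaneously (i) small enough for the descent inequality and the ball-of-validity to hold, and (ii) large enough that $\sum_k\gamma_k\|g_k\|$ grows at the rate needed for the $\eta^{-5}$ iteration count, is the delicate balancing act that drives the exponent and must be done carefully with the explicit $k^{-2/5}$ and $k^{-3/5}$ schedules.
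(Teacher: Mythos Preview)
Your proposal is essentially the paper's argument: condition on the safe event from Lemma~\ref{lemma:3}/Proposition~\ref{proposition}, apply the local-smoothness descent inequality, telescope, and read off the weighted average as $\E_R\|\nabla B_{\eta,\nu}(x_R)\|$; the KKT translation is also the same.

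One algebraic difference is worth flagging. You expand $\langle\nabla B_{\eta,\nu}(x_k),g_k\rangle$ as $\|\nabla B_{\eta,\nu}(x_k)\|^2+\langle\nabla B_{\eta,\nu}(x_k),\zeta_k\rangle$, which yields a term $\gamma_k\|\nabla B_{\eta,\nu}(x_k)\|^2$ weighted by $\gamma_k$ rather than by $\hat\gamma_k:=\gamma_k\|g_k\|$. The paper instead writes $\langle\nabla B_{\eta,\nu}(x_k),g_k\rangle=\|g_k\|^2-\langle\zeta_k,g_k\rangle$, so that the leading term becomes $\gamma_k\|g_k\|^2=\hat\gamma_k\|g_k\|\geq\hat\gamma_k(\|\nabla B_{\eta,\nu}(x_k)\|-\|\zeta_k\|)$; this produces $\sum_k\hat\gamma_k\|\nabla B_{\eta,\nu}(x_k)\|$ directly, and the weights $\hat\gamma_k$ match the sampling distribution of $R$ exactly. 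Your decomposition would still work after a comparison of $\|g_k\|$ with $\|\nabla B_{\eta,\nu}(x_k)\|$ via $\|\zeta_k\|\leq O(\eta)$, but the paper's route avoids that detour and is why the telescoped bound reads as a first-power sum rather than a squared one. Your closing observation about the delicate balance between the $k^{-2/5}$ and $k^{-3/5}$ schedules is exactly the point: the paper handles it by bounding each smoothness term with the matching factor of $\hat\alpha_k$ in $\hat\gamma_k$, and lower-bounding $\sum_k\hat\gamma_k\geq \frac{C\eta}{2L}K^{2/5}$.
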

\paragraph{Proof sketch:} The proof is based on standard non-convex analysis techniques. The log barrier $B_{\eta,\nu}(x)$ is only locally smooth with smoothness parameter $M_{2,\nu}(x_k)\leq %at point $x_k$ 
 M_{\nu}\left(1+\frac{2\eta}{\hat \alpha_k}\right) + \frac{4L^2\eta}{\hat \alpha_k^2}$ for all the points within the ball with radius $\gamma_k$ around $x_k.$ Using the local smoothness, we bound the improvement in barrier value $B_{\eta,\nu}(x_{k+1}) - B_{\eta,\nu}(x_{k})$ for each iteration $k$. Summing this together for all $k\leq K$ provides the bound on %the sum of norms of the gradients
 $\sum_{k = 1}^K \frac{\gamma_k}{\|g_k\|}\|\nabla B_{\eta,\nu}(x_{k})\|.$ %After division by $\sum_{k = 1}^K \frac{\gamma_k}{\|g_k\|}$, this represents an
This expression represents scaled $\E_R\|\nabla B_{\eta,\nu}(x_{R})\|$ for $R$ defined at Step 8 of ZeLoBa algorithm.  That is, we get $\E \|\nabla B_{\eta,\nu}(x_R)\| \leq 5\eta$ for $$K \geq \max\left\{\left(\frac{LD_f}{C\eta^2}\right)^{5/2}, \left(\frac{L^2\sqrt{d}(1+1/C)}{C^2\eta^3}\right)^{5/3}, \left(\frac{5\ln 1/\eta}{C\eta}\right)^5\right\} = \tilde O\left(\frac{d^{5/6}}{\eta^5}\right).$$ By construction, $\nabla B_{\eta,\nu}(x_R)$ equals to $ \nabla_x L(x_R,\lambda_R)$ for the smoothed problem, that implies ($\eta$-KKT.3). ($\eta$-KKT.1) follows from Proposition \ref{proposition}.  ($\eta$-KKT.2) follows from $\lambda_R = \frac{\eta}{\hat\alpha_R}$ and Lemma \ref{lemma:3}. %Combined with $\lambda_R(-f^c_{\nu}(x_R)) \leq 2\eta$, it implies the statement of Theorem \ref{theorem:4}. 
The full proof is %of  the theorem isquite technical and
 %can be found
 in Appendix \ref{proof:F}.\\ 
\textbf{Remark:} The obtained bound on the number of measurements, $O(\frac{d^3}{\eta^9})$, is $\frac{1}{\eta^2}$ times worse compared to \cite{usmanova2019log}. This comes as a price for non-smoothness.
This difference agrees %matches
with the difference $\frac{1}{\eta^2}$ in upper bounds in other works on zero-th order optimization such as \cite{duchi2015optimal}.
% \subsection{Connection with initial problem in differentiable case.}
\begin{corollary}\label{sec:relation}
 If the initial objective and constraints are differentiable, then the result obtained in Theorem \ref{theorem:4} %about local optimality of $B_{\eta,\nu}(x_R)$
 entails satisfaction of the approximate KKT condition for the initial problem (\ref{problem}). 
\end{corollary}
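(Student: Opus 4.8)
The plan is to carry each of the three smoothed $\eta$-KKT conditions from Theorem \ref{theorem:4} over to the original problem (\ref{problem}), exploiting that the smoothing radius $\nu = C\eta/L$ vanishes as $\eta\to 0$ and that differentiability of the $f^i$ lets us compare $\nabla f^i_{\nu}$ with $\nabla f^i$ pointwise. The one genuinely new step is to disaggregate the single barrier multiplier $\lambda_R = \eta/\hat\alpha_R$, attached to the aggregate constraint $f^c = \max_i f^i$, into individual multipliers $\lambda^i_R$ for the constraints $f^i$. For the feasibility part of ($\eta$-KKT.1), Proposition \ref{proposition} already gives $f^c(x_R)\le 0$ with high probability, and since $f^i\le f^c$ this yields $f^i(x_R)\le 0$ for every $i$; together with the nonnegativity $\lambda^i_R\ge 0$ established below, this is exactly ($\eta$-KKT.1).

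To build the multipliers I would use that, for $C^1$ functions, $\nabla f^c_{\nu}(x_R) = \E_b[\nabla f^c(x_R+\nu b)]$, and that the maximum defining $f^c$ is attained at a single index $i^*(y)$ for almost every $y$, with $\nabla f^c(y) = \nabla f^{i^*(y)}(y)$ there. Writing $p^i_R := \Prob_b\big(i^*(x_R+\nu b)=i\big)$, so that $\sum_i p^i_R = 1$, and $\bar g^i := \E_b[\nabla f^i(x_R+\nu b)\mid i^*=i]$, gives $\nabla f^c_{\nu}(x_R) = \sum_i p^i_R\,\bar g^i$. I then set $\lambda^i_R := \lambda_R p^i_R \ge 0$, so that $\sum_i\lambda^i_R = \lambda_R$ and $\lambda_R\nabla f^c_{\nu}(x_R) = \sum_i\lambda^i_R\bar g^i$. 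For stationarity I would subtract the exact and smoothed Lagrangian gradients,
\[
\nabla_x L(x_R,\lambda_R) - \nabla B_{\eta,\nu}(x_R) = \big(\nabla f^0(x_R) - \nabla f^0_{\nu}(x_R)\big) + \sum_{i=1}^m \lambda^i_R\big(\nabla f^i(x_R) - \bar g^i\big),
\]
and bound each term: every bracket is the gap between a gradient at $x_R$ and an average of the same gradient over the $\nu$-ball, hence at most the modulus of continuity $\omega_i(\nu) := \sup_{\|u\|\le\nu}\|\nabla f^i(x_R+u) - \nabla f^i(x_R)\|$, which tends to $0$ as $\nu\to 0$ (and is $O(\nu)=O(\eta)$ if the $f^i$ are in addition $M$-smooth). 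Using $\sum_i\lambda^i_R = \lambda_R\le 1/C$ (from Lemma \ref{lemma:3}, $\hat\alpha_R\ge C\eta$) and $\E\|\nabla B_{\eta,\nu}(x_R)\|\le 5\eta$ from Theorem \ref{theorem:4}, the triangle inequality gives $\E\|\nabla_x L(x_R,\lambda_R)\|\le 5\eta + \omega_0(\nu) + \frac{1}{C}\max_i\omega_i(\nu)\le\tau_2\eta$, i.e. ($\eta$-KKT.3).

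For complementary slackness I would first discard strictly slack constraints: if $f^c(x_R)-f^i(x_R) > 2\nu L$ then, since each $f^j$ varies by at most $\nu L$ over the $\nu$-ball, constraint $i$ is never active there, so $p^i_R=0$ and $\lambda^i_R=0$. For the remaining near-active constraints $-f^i(x_R)\le -f^c(x_R)+2\nu L$, and summing over $i$ with the bound $|f^c_{\nu}-f^c|\le\nu L$ and the smoothed ($\eta$-KKT.2) bound $\lambda_R(-f^c_{\nu}(x_R))\le 3\eta$ yields
\[
\sum_i \lambda^i_R\big(-f^i(x_R)\big) \le \lambda_R\big(-f^c(x_R)\big) + 2\nu L\lambda_R \le \lambda_R\big(-f^c_{\nu}(x_R)\big) + 3\nu L\lambda_R \le 3\eta + 3\eta = 6\eta,
\]
where $3\nu L\lambda_R\le 3\eta$ since $\nu L = C\eta$ and $\lambda_R\le 1/C$. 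As all summands are nonnegative, each satisfies $\lambda^i_R(-f^i(x_R))\le 6\eta$, which is ($\eta$-KKT.2) with $\tau_1=6$.

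The main obstacle is precisely the multiplier disaggregation across the non-smooth maximum: $\nabla f^c_{\nu}(x_R)$ is a $p^i_R$-weighted average of the individual gradients evaluated throughout the smoothing ball rather than at $x_R$, and controlling that displacement is where differentiability is really used. Under mere $C^1$ regularity this only yields the vanishing-modulus form, so the conditions hold in the limit $\eta\to 0$; obtaining the clean $\tau_1,\tau_2$ constants that are independent of $\eta$ additionally requires $M$-smoothness, which turns $\omega_i(\nu)$ into an $O(\eta)$ term. A secondary technicality is the measure-zero tie set in the definition of $p^i_R$, which is harmless because coincident constraints share gradients.
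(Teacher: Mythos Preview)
Your argument is correct and takes a genuinely different route from the paper. The paper disaggregates the scalar multiplier by placing all of it on a single index, the empirical maximizer $i\in\arg\max_j\hat F^j(x_R)$, setting $\hat\lambda^i_R=\eta/(-\hat F^c_{\nu}(x_R))$ there and zero elsewhere; it then invokes the Nesterov bound $\|\nabla f^i_{\nu}-\nabla f^i\|\le\nu L d$ to control the stationarity gap, obtaining $\E\|\nabla_x L(x_R,\hat\lambda_R)\|\le(6+d)\eta$, and gets $\tau_1=3$ for complementary slackness directly from this single-index choice. You instead distribute $\lambda_R$ across constraints via the probabilities $p^i_R$ that each $i$ is the maximizer over the smoothing ball, which is precisely what makes the identity $\sum_i\lambda^i_R\bar g^i=\lambda_R\nabla f^c_{\nu}(x_R)$ exact. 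This is the more careful treatment: the paper's chain silently replaces $\nabla f^c_{\nu}(x_R)$ in the barrier gradient by $\nabla f^i_{\nu}(x_R)$ for the single chosen $i$, an identification that is not justified when several constraints are near-active across the $\nu$-ball, whereas your mixture handles that case cleanly. The trade-off is that your stationarity bound is expressed through the moduli $\omega_i(\nu)$ and therefore yields explicit $\eta$-independent constants only once $M$-smoothness is assumed (giving $\omega_i(\nu)\le M\nu=O(\eta)$); the paper's use of the Nesterov bound likewise implicitly requires smoothness, so the effective hypotheses coincide. Your complementary-slackness argument, with the $2\nu L$ separation to zero out strictly slack constraints, is also more explicit than the paper's and yields $\tau_1=6$.
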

\begin{proof}
 We define $\hat \lambda_R \in \R^m $, where
% \begin{align}\label{lambda}
$
\hat \lambda^i_R = \begin{cases}
0 
&, i \notin \arg\max_i \hat F^i(x_R),\\
\frac{\eta}{-\hat F^c_{\nu}(x_R)} 
&, i \in \arg\max_i \hat F^i(x_R) 
\end{cases}.
$
% \end{align}
We can easily see that condition  \ref{KKT.1}  holds with high probability by construction: $-f^c_{\nu}(x_R)\geq \hat\alpha_R\geq 0.$ 
Condition  \ref{KKT.2} holds for all $i \notin \arg\max_i F^i(x_R)$ since $\hat\lambda^i_R$ is just equal to $0$.
For $i \in \arg\max_i \hat F^i(x_R)$, we have 
$\frac{\eta}{-\hat F^i(x_R)}(-f^i(x_R)) \leq \eta + \eta \frac{\hat F^i(x_R) - f^i(x_R)}{-\hat F^i(x_R)} \leq \eta+ \eta \frac{\sigma\sqrt{\ln 1/\de}/\sqrt{n_k}}{\hat\alpha_R} \leq 3\eta.$ 
Finally, we can verify that condition  \ref{KKT.3} holds as follows, using $\|\nabla f^i_{\nu}(x) - \nabla f^i(x)\| \leq \nu L d$ \citep{nesterov2017random}:
\begin{align*}
&\E\|L(x_R,\hat\lambda_R)\| = \E\|\nabla f^0(x_R) + \sum_{i = 1}^m\hat\lambda^i \nabla f^i(x_R)\|= \E\left\|\nabla f^0(x_R) + \frac{\eta \nabla f^i(x_R)}{-\hat F^c_{\nu}(x_R)}\right\|  \\
& \leq \E\bigg\|\nabla f^0_{\nu}(x_R)  + \frac{\eta \nabla f^i_{\nu}(x_R)}{-\hat F^c_{\nu}(x_R)}\bigg\| + \left\| \nabla f^0(x_R) -  \nabla f^0_{\nu}(x_R)\right\| +  \eta\left\|\frac{ \nabla f^i(x_R)}{-\hat F^c_{\nu}(x_R)} - \frac{ \nabla f^i_{\nu}(x_R)}{-\hat F^c_{\nu}(x_R)}\right\| \\
&\leq  \|\nabla B_{\eta,\nu}(x_R)\| + \eta\left(1+\frac{\nu L d}{\hat\alpha_R}\right)\leq \|\nabla B_{\eta,\nu}(x_R)\|+\eta (d+1) \leq \eta(6 +d).\end{align*} 
\end{proof}
In case of non-smooth objective and constraints, we do not know yet how to relate directly the result of Theorem \ref{theorem:4} to the original problem (\ref{problem}). This is a direction of the future research. 
\section{Experiments}
We consider the application to safe iterative controller design. 
%The control problems with uncertainty in measurements are usually complicated by the fact that the noise propagates through time horizon. This happens because the noise enters the closed-loop controller, and hence recursively arises in all the time steps. In such a case, the noise model in the cost gets more complicated than just sub-Gaussian noise. For this reason, we assume that the system dynamics are not directly affected by noise, but that the constraint and cost measurements are. This simple noise model can be used to describe uncertain navigation tasks, as the one that follows. % That is why, in the applications we consider the controller feedback without noise. We assume that the noise enters only the constraints measurements and the final cost of the dynamical trajectory.
Consider the basic unicycle dynamics %. Unicycle corresponds to the following dynamical model:
    $ \dot x = v \cos \theta, 
     \dot y = v \sin \theta,
     \dot \theta = \omega.$ 
    Here the states $q = [x,y,\theta]$ describe the spatial coordinates $x,y$ and the direction angle $\theta$. The control inputs $u = [v, \omega]$ describe the speed and the angular velocity. Since the simple Euler discretization is valid only when the sampling period $dt$ is sufficiently  small, we use a discretized model of the unicycle based on direct integration of the dynamics %during the sampling time $dt$
    %, to obtain
    \citep{nino2006discrete, adinandra2012practical}: 
\begin{align*}
q_{t+1} = \begin{bmatrix}
 x_{t+1}\\
 y_{t+1}\\
 \theta_{t+1}
\end{bmatrix} = 
 q_{t} + \begin{bmatrix}
2v_t + \gamma(\omega_t)\cos (\theta_t + \frac{dt}{2}\omega_t)  \\
2v_t + \gamma(\omega_t)\sin (\theta_t + \frac{dt}{2}\omega_t)\\
dt\omega_t
\end{bmatrix}, \; \gamma(\omega_t) =\begin{cases}
\sin (\frac{dt}{2}\omega_t),&\omega_t \neq 0\\
\frac{dt}{2},&\omega_t = 0
\end{cases}.
\end{align*}
We choose a memoryless linear  feedback law $u_{t+1} = U q_{t}$, where $U \in \R^{3\times 2}$  is the optimizing parameter. The state sequence determined by $U$ is denoted by $q_t(U)$, $t=1, \dots,T$ where $T$ is the planning horizon. The goal is to lead the vehicle from a starting point $q_A$ to a goal destination $q_B$ while avoiding collision with high-probability. The cost function is defined as
    $\sum_{t = 1}^{T} \|q_t(U) - q_B\|^2.$ % + 10 \cdot \|q_T(U) - q_B\|^2.$ Then, the goal of the algorithm is to find the coefficients of linear controller policy $U \in \R^{3\times 2}$, that minimizes the cost. 
The constraints are formulated such that the trajectory does not collide with the the ball shaped obstacle placed at $(x_C, y_C)^T$ with radius $1$. The resulting constrained optimization problem is as follows:
 \begin{align*}
\min_{U\in \R^{3\times 2}}  &~\frac{1}{T}\sum_{t = 1}^{T} \|q_t(U) - q_B\|^2 \\%+ 10 \cdot \|q_T(U) - q_B\|^2 \\
    \text{subject to }&~ 1 - \left\|(x_t(U),y_t(U))^T - (x_C,y_C)^T\right\|^2 \leq 0,~t = 1,\ldots, T.
 \end{align*}
In the zero-th order oracle approach, we assume no knowledge of the dynamics, the constraints or the cost functions. We only assume noisy measurements of the cost function and the constraints. %. %along the trajectory for any controller. 
Thus, we address this problem using the ZeLoBa algorithm. We set the parameters of the algorithm to $\nu_k= \min\{\frac{\eta}{L},\frac{\hat\alpha_k}{L}\}$ for safety, $L = 40$ set by trial, $n_k = 7$, $K = 500$, and initialize the algorithm with a safe control policy. The algorithm iteratively improves the controller while avoiding the constraints. The total number of measurements is $N_K = 3500.$ 
% Consider an above optimization problem with $(x_C,y_C) = (0,0), q_B = (2,0,\pi), q_A = (-2,0,\pi), T = 10$, and assume that the cost and the constraints can be measured only by noisy sensors. Then, we address this problem using the ZeLoBa algorithm.  
In Figure \ref{F} a) below we demonstrate the achieved results of 20 trials of the stochastic ZeLoBa algorithm with the fixed initialization. %In each trial the noise measurements is different.
In none of the trials the constraints were violated. 
In Figure \ref{F} b) we show the trajectory generated by $U_0$ controller. 
In Figure \ref{F} c) we demonstrate an example of the trajectory generated by the final controller obtained during one of the trials of stochastic ZeLoBa algorithm. 
\begin{figure*}[t!]
    \centering
    \includegraphics[height=2in]{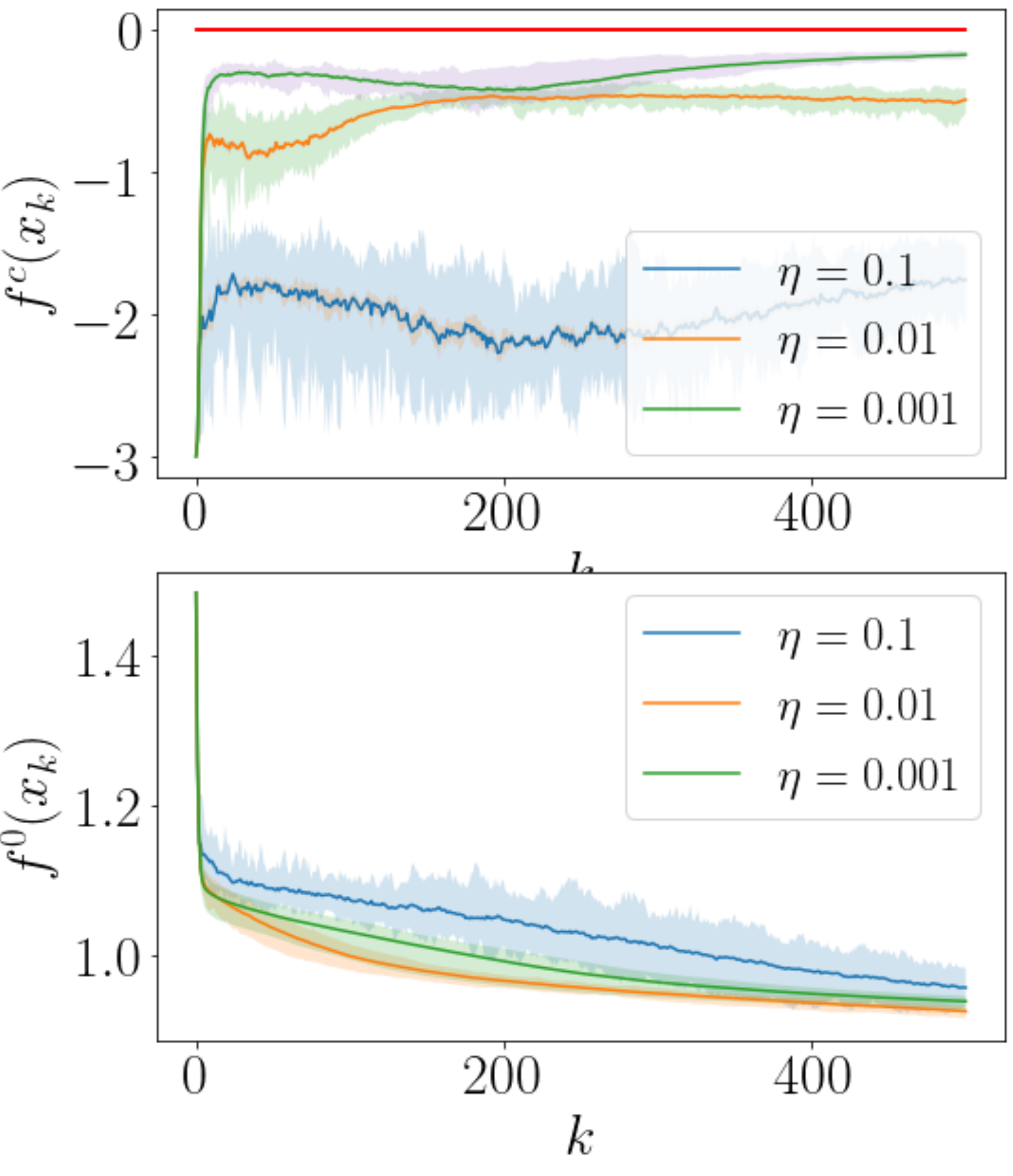}
    \includegraphics[height=2in]{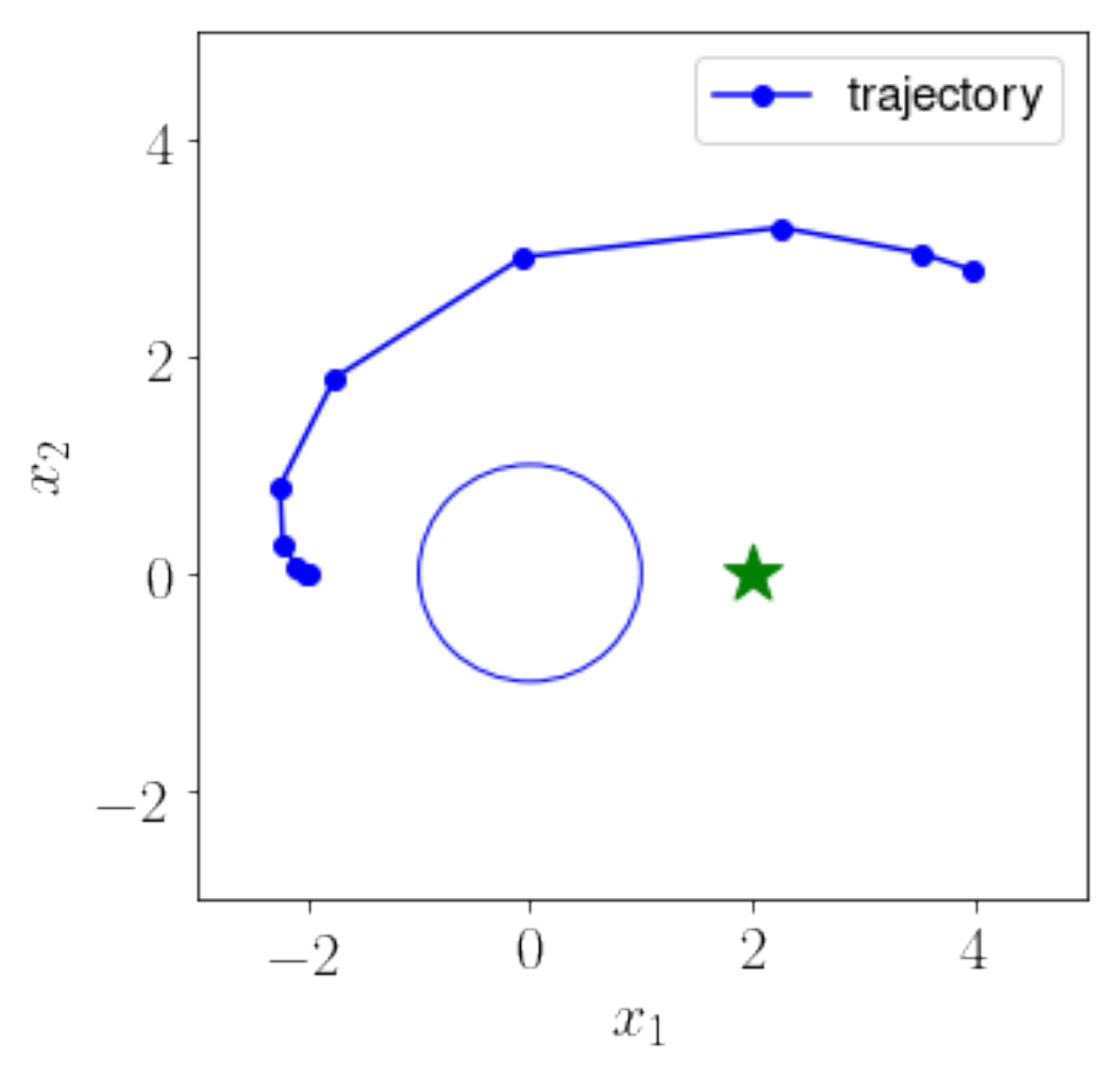}
    \includegraphics[height=2in]{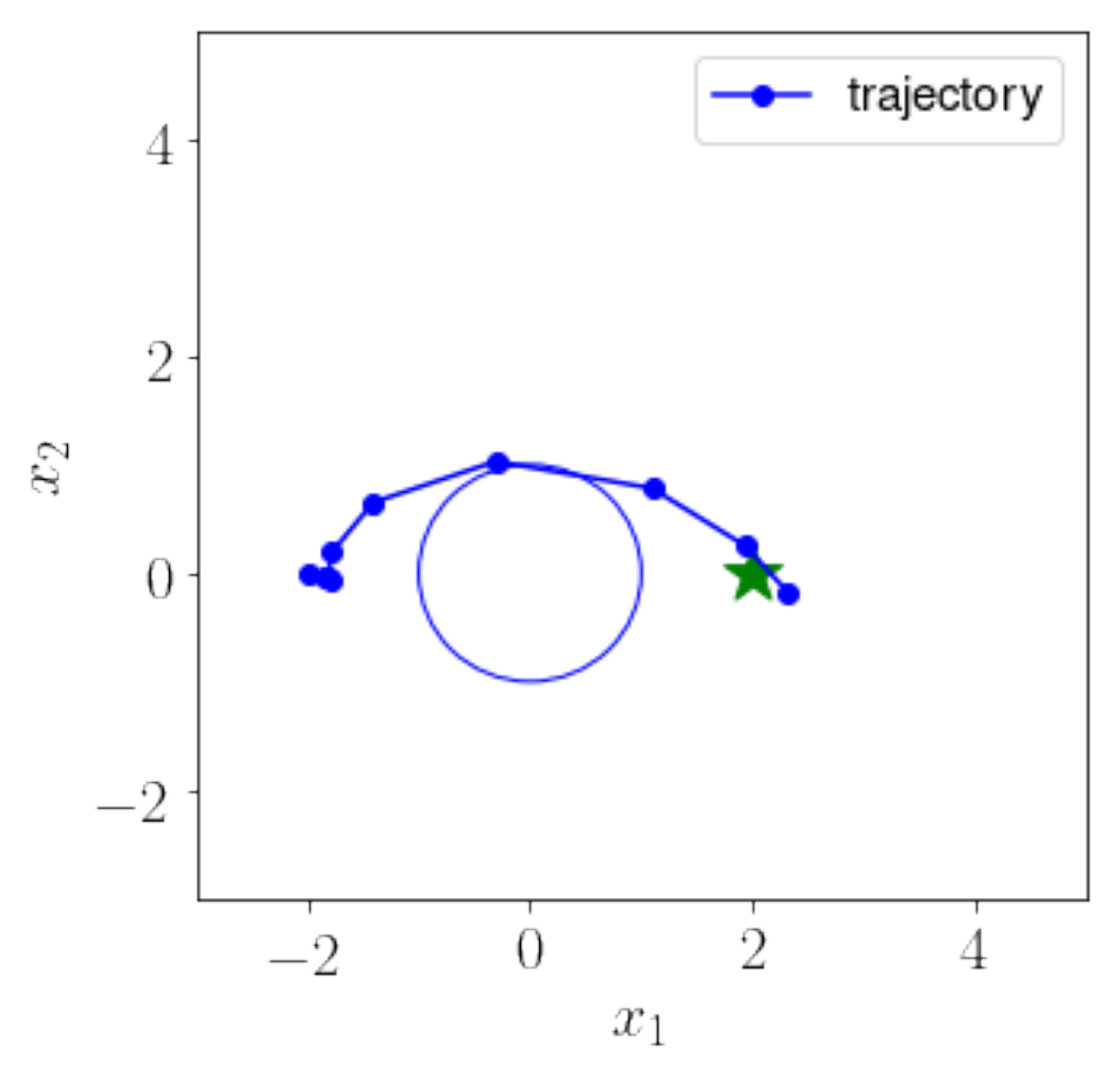} 
     \vspace{-0.4cm}
    % \includegraphics[height=5.5cm]{Untitled}
    %      \vspace{-0.5cm}
    \caption{a) \textit{Left.} Maximum constraint value (\textit{top}) and the objective value (\textit{bottom}) for 20 experiments; %Average convergence and minimal distance from the obstacle; 
    b) \textit{Middle.} Control trajectory with $U_0$; c) \textit{Right.} Final control trajectory obtained by ZeLoBa with $\eta = 0.001.$}
    \label{F}
\end{figure*}
\addcontentsline{toc}{section}{Bibliography}
\bibliography{bibliography}
\clearpage
\appendix
\section{Proof of property 1): $M_{\nu}$ smoothness of $f_{\nu}$, Section \ref{sec:oracle}:}\label{proof:A00}
By definition and the property of the smoothed function $\nabla f_{\nu}(x) = \E_s d \frac{f(x+\nu s) - f(x)}{\nu}s,$ where $s\sim \mathcal U(\Ss^d)$. 
Hence $$\nabla f_{\nu}(x) - \nabla f_{\nu}(y)  = \E_s\left[ d \frac{f(x+\nu s) - f(x)}{\nu}s -  d \frac{f(y+\nu s) - f(y)}{\nu}s\right] =  d \E_s \frac{f(x+\nu s) - f(y+\nu s)}{\nu}s.$$
Let us denote by $\delta_f(s)$ the function: $$\delta_f(s) := f(x+\nu s) - f(y+\nu s).$$
Then, we have:
\begin{align}\label{eq:A10}
    \|\nabla f_{\nu}(x) - \nabla f_{\nu}(y)\|_2 =
\frac{d}{\nu} \|\E_s \de_f(s) s\|_2.
\end{align}
First, note that the absolute value of $\delta f(s)$ is bounded by
$$|\delta_f(s)| = |f(x+\nu s) - f(y+\nu s)|\leq L\|x-y\|_2.$$
Assume that $r\in\R^d :\|r\|_2 = 1$ is the unit vector of the direction of $\E_s \de f(s) s.$ Then,
\begin{align*}
\|\E_s \de f(s) s\|_2 
& = \la \E_s \de_f(s) s, r\ra = \E_s  \de_f(s)  \la s, r\ra
\\
& =  \frac{1}{2}\E_s  [\de_f(s)  \la s, r\ra|\la s, r\ra \geq 0]  + \frac{1}{2}\E_s  [\de_f(s)  \la s, r\ra|\la s, r\ra<0]
\\
& = \frac{1}{2}\E_{s\in{\Ss^d}, \la s, r\ra\geq 0}  [\de_f(s)  \la s, r\ra]  + \frac{1}{2}\E_{s\in{\Ss^d}, \la s, r\ra\geq 0}  [\de_f(-s)  \la s, r\ra].
\end{align*}
Note that in the above terms the multiplicands $\la s,r\ra$ are positive. 
Therefore, we can bound the whole product $\de_f(s)\la s,r\ra\leq |\de_f(s)|\la s,r\ra \leq L\|x-y\|_2 \la s,r\ra.$ 
Then, this has to be integrated over the half-sphere $s\in S^d, \la s,r\ra\geq 0,$ which we denote by $\Ss^d_+.$ 
Consequently, using (\ref{eq:A10}) we get 
\begin{align}\label{eq:A11}
    \|\nabla f_{\nu}(x) - \nabla f_{\nu}(y)\|_2 
 =\frac{d}{\nu}  \E_s \de_f(s) \la s, r\ra \leq \frac{d}{\nu}L\|x-y\|_2  \E_{s\in \Ss^d_+} \la  s, r\ra.
\end{align} Note that the expectation over the half sphere $s\sim \mathcal{U}(\Ss^d_+)$ of projection of $s$ onto the one direction $r$ is \begin{align}\label{eq:A1}
 \E_{s\in\Ss^d_+}\la s, r\ra 
 &=  \frac{2}{Vol(\Ss^d)}\int_{\theta \in [0,\pi/2]}  Vol(\Ss^{d-1})\sin\theta(\cos \theta)^{d-1} d\theta 
 =\frac{ Vol(\Ss^{d-1})}{Vol(\Ss^d)}\int_0^1t^{d-1}dt
%  (1-t^2)^{\frac{d-1}{2}}d t 
 .\end{align} In the above $Vol(\Ss^d)$ denotes the surface area of $\Ss^d.$ Then, we can use the following well known relations. \\
 If $d$ is even:
 $$
 Vol(\Ss^d) =  
    \frac{(2\pi)^{d/2}}{ (d-2)!!}
,~  Vol(\Ss^{d-1}) =
    \frac{2(2\pi)^{(d-2)/2}}{(d-3)!!}.
 $$
 If $d$ is odd:
  $$
 Vol(\Ss^d) =  
    \frac{2(2\pi)^{(d-1)/2}}{(d-2)!!}
,~  Vol(\Ss^{d-1}) =
    \frac{(2\pi)^{(d-1)/2}}{(d-3)!!}.
 $$
Therefore, if $d$ is even: $\frac{Vol(\Ss^{d-1})}{Vol(\Ss^d)} = \frac{(d-2)!!}{\pi(d-3)!!} \leq \sqrt{d}.$
If $d$ is odd: $\frac{Vol(\Ss^{d-1})}{Vol(\Ss^d)} = \frac{(d-2)!!}{2(d-3)!!} \leq \sqrt{d}.$
Hence, from (\ref{eq:A1}) we get
% $ Vol(S^{d+1}) = \frac{2\pi}{d-1} Vol(S^{d-1}) = \frac{(2\pi)^{\frac{d-1}{2}}}{(d-1)!!} Vol(S^{0/1}),  Vol(S^{d}) = \frac{2\pi}{d} Vol(S^{d-2}) = \frac{(2\pi)^{\frac{d}{2}}}{(d)!!} Vol(S^{0/1}).$ 
$$ \E_{s\in\Ss^d_+}\la s, r\ra  = \frac{ Vol(\Ss^{d-1})}{Vol(\Ss^d)}\frac{1}{d} 
\leq  \frac{\sqrt{d}}{d}\leq \frac{1}{\sqrt{d}}.$$
Finally,  from (\ref{eq:A11}) and  we can conclude the statement of the property:
$$\|\nabla f_{\nu}(x) - \nabla f_{\nu}(y)\|_2\leq \frac{d}{\nu \sqrt{d}}L\|x-y\|_2 = \frac{\sqrt{d} L}{\nu}\|x-y\|_2.$$

\section{
Proof of property 3) of $f_{\nu}$  defined in Section \ref{sec:oracle}: $\|\nabla f_{\nu}(x)\| \leq L$}\label{proof:A0}
	By definition we have
		$|f_{\nu}(x) - f_{\nu}(y)| = \left|\E_{b\sim U(\mathbb{B^d})} \big(f(x + \nu b) -  f(y + \nu b)\big)\right|.$	Then, using Jensen's inequality for $|\cdot|$, we can swap $\E_b$ and the absolute value $|\cdot|$ in the above, and obtain:
		$$|f_{\nu}(x) - f_{\nu}(y)| \leq 
		\E_{b\sim U(\mathbb{B^d})}|f(x + \nu b) - f(y + \nu b)|\leq  \E_{b\sim U(\mathbb{B^d})}L\|x-y\| = L\|x-y\|.$$ 
		From the above, any directional derivative is bounded by $L$:
		$$\frac{\la\nabla f_{\nu}(x),u\ra}{\|u\|} = \lim_{t\rightarrow 0}\frac{\la\nabla f_{\nu}(x),tu\ra}{\|tu\|} = \lim_{t\rightarrow 0} \frac{ |f_{\nu}(x + tu) - f_{\nu}(x)|}{\|tu\|}\leq  L  ~~\forall x, u\in \R^d.$$
		Consequently, the norm of the gradient $\nabla f_{\nu}(x)$ is bounded by $L$:
	$$\|\nabla f_{\nu}(x)\| \leq  L  ~~\forall x\in \R^d.$$
\section{Connection of Assumption \ref{assumption:2} with Mangasarian-Fromovitz Constraint Qualification (MFCQ) }\label{proof:assumption}
We define the following assumption on the constraint functions $f^i(x)$, that implies Assumption \ref{assumption:2} and that is easier to check:
\begin{assumption}\label{assumption:mfcq}
   For any point $x\in D$ there exists a direction $s_x$, such that $\la s_x, \nabla f^i(x)\ra > 0$ for all $i$ which are $\eta$-approximately active at $x$, i.e., $-f^i(x)\leq \eta.$
\end{assumption}
If we set $\eta= 0$ and consider Assumption \ref{assumption:mfcq} only for the optimal point, it reduces exactly to  MFCQ condition.

Below, we demonstrate that Assumption \ref{assumption:mfcq} is sufficient for Assumption \ref{assumption:2} to hold, i.e., that then there exists a uniform $l>0$ such that $\|\nabla f^c_{\nu}(x)\|\geq l$ for all $x$ close to the boundary: $ -f^c_{\nu}(x)\leq \eta$. 
Recall that $f^c_{\nu}(x)$  is the $\nu$-smoothed version of $\displaystyle f^c(x) = \max_{i = 1,\ldots,m} f^i(x)$ as defined in Section \ref{sec:oracle}, that is,  $\displaystyle f^c_{\nu}(x)=\E_{b\sim U(\mathbb B^d)} f^c(x+\nu b).$ 
Note that $f^c(x)$ is non-differentiable, but we can define its gradients for all the points $x$ in which the maximizing constraint function is unique. In the rest of the points, let us define the convex hull of the gradients of the maximizing functions: 
 $$\displaystyle \hat \partial f^c(x) := \text{Conv}_{ i\in I_{\max}(x)}(\nabla f^i(x)),$$
 where $I_{\max}(x)$ is the set of indices defined by
 \begin{align}\label{eq:I_max}
 I_{\max}(x) := \arg\max_{j=1,\ldots,m} f^j(x).
 \end{align}
 In the points in which $f^c(x)$ is differentiable, $\hat \partial f^c(x) = \nabla f^c(x)$. In the convex case, $\hat \partial f^c(x)$ reduces to sub-differential $\partial f^c(x)$.  Next, using Assumption \ref{assumption:mfcq}, we can demonstrate that $ \hat \partial f^c(x)$ is bounded away from zero for the points $x$ close to boundary. 
 Indeed, from Assumption \ref{assumption:mfcq} it follows that  $$\forall x\in D ~\exists s_x\in \R^d, \bar l_x>0 : ~\la s_x, \nabla f^i(x)\ra \geq \bar l_x~~~ \forall i\in I_{active}(x),$$ where $ I_{active}(x):= \{i \in \{1,\ldots,m\}:f^i(x)\geq -\eta\}$ is the set of indices of $\eta$-approximately active constraints at point $x$. 
 For $x$ close to the boundary such that $I_{active}(x) \neq \emptyset$, we have $I_{\max}(x) \subseteq I_{active}(x),$ where $I_{\max}(x)$ determines $\hat \partial f^c(x)$ (\ref{eq:I_max}).  Then, any convex combination of the gradients of $\eta$-approximately active constraints $\nabla f^i(x), i\in I_{\max}(x)$, is bounded away from zero: 
    $$\left\la \sum_{ i\in I_{\max}(x)} \beta_i \nabla  f^{i}(x), s_x\right\ra > \bar l_x,~ \forall \beta_i:\sum_{i\in I_{\max}(x)} \beta_i = 1, \beta_i \geq 0,$$

    That means $0\notin \hat \partial f^c(x).$ Hence, if $\hat \partial f^c(x)$ is sufficiently  bounded away from zero, then the gradient of the smoothed version $\nabla f^c_{\nu}(x)$ is also bounded away from zero, and Assumption \ref{assumption:2} holds. 

\section{Proof of lower confidence bound $\hat\alpha_k$ defined in Section \ref{sec:barrier_estimator}
% $\Prob\left\{\hat \alpha_k\leq \min\{|f^c_{\nu}(x_k)|, |f^c(x_k)|\}\right\}\geq 1-\bar\de$. 
}\label{proof:A}
We introduce the following notation for all the proofs below:
\begin{align}\label{def:barde}
\bar \delta := \frac{\delta}{2K+1}.
\end{align}
Using Hoeffding's inequality, for zero-mean independent $\sigma^2$-sub-Gaussian random variables $X_i$, $i = 1,\ldots,n$, for any $\lambda >0$ we have the bound
% s:
% $$\Prob\left\{\sum_{i = 1}^n X_i  \geq \lambda \right\}\leq e^{-\frac{\lambda^2}{2n\sigma^2}}, ~ \Prob\left\{\sum_{i = 1}^n X_i  \leq -\lambda \right\}\leq e^{-\frac{\lambda^2}{2n\sigma^2}}$$ 
% Combining the above events together, we get: 
$$\Prob\left\{\left|\sum_{i = 1}^n X_i\right|  \geq \lambda \right\}\leq e^{-\frac{\lambda^2}{n\sigma^2}}.$$
Setting the probability in the previous expression to $\bar\de = e^{-\frac{\lambda^2}{n\sigma^2}}$, we get, $ \lambda = \sqrt{n} \sigma\sqrt{\ln \frac{1}{\bar\de}}$, and thus, we derive the following confidence bounds:
\begin{align}\label{eq:hoeffding}
\Prob\left\{\left|\frac{\sum_{i = 1}^n X_i}{n} \right| \leq \frac{\sigma}{\sqrt{n}}\sqrt{\ln\frac{1}{\bar\de}}\right\}\geq 1-\bar\de.\end{align}
Second, note that $F^i(x_k, \xi^{i-}_{kl}) - f^i(x_k) = \xi^{i-}_{kl}$ for $l = 1,\ldots,n_k, i = 0,\ldots,m$ are $\sigma^2$-sub-Gaussian independent random variables.  Then, from (\ref{eq:hoeffding}) for any $\bar\de \in (0,1)$ we have $\forall i=0,\ldots,m$ 
$$\Prob\left\{ \left|\sum_{l = 1}^{n_k}\frac{ F^i(x_k,\xi^{i-}_{kl}) - f^i(x_k)}{n_k}\right| \leq \frac{\sigma \sqrt{\ln\frac{1}{\bar\de}}}{\sqrt{n_k}}\right\}\geq 1-\bar\de.$$  
Thus,
$\hat F^i(x_k) : = \frac{\sum_{l = 1}^{n_k} F^i(x_k,\xi^{i-}_{kl})}{n_k} + \frac{\sigma}{\sqrt{n_k}} \sqrt{\ln \frac{1}{\bar\de}}$
indeed expresses an upper confidence bound on $f^i(x_k)$: 
\begin{align}
\Prob\{ f^i(x_k)\leq \hat F^i(x_k)\}\geq 1-\bar\de.
\end{align}
Third, we demonstrate that $\displaystyle\max_{i = 1,\ldots,m} \hat F^i(x_k)$ expresses an upper confidence bound on $\displaystyle f^c(x_k) = \max_{i = 1,\ldots,m} f^i(x_k)$: 
\begin{align}\label{eq:fc}
\Prob \left\{f^c(x_k)  \leq \max_{i = 1,\ldots,m} \hat F^i(x_k)\right\}\geq 1-\bar\de.
\end{align}
Indeed,  the above condition (\ref{eq:fc}) can be violated \textit{only if}  the upper bound $f^{j}(x_k) \geq \hat F^{j}(x_k)$ is violated for all $\displaystyle  j \in I_{\max}(x_k)$, 
with $\displaystyle I_{\max}(x_k) = \arg\max_{j=1,\ldots,m} f^j(x_k)$ as defined in (\ref{eq:I_max}). The probability of such violation event is indeed bounded by $\bar \de$:
$$\Prob\{\forall j \in I_{\max}: \hat F^j(x_k) \leq f^j(x)\} = \Prob\{\cap_{j \in I_{\max}} \hat F^j(x_k) \leq f^j(x)\} \leq \min_{j \in I_{\max}} \Prob\{ \hat F^j(x_k) \leq f^j(x)\} \leq  \bar\de.$$
Denote $\displaystyle \bar F^c(x_k) :=  \max_{i=1,\ldots,m}\frac{\sum_{l = 1}^{n_k} F^i(x_k,\xi^{i-}_{kl})}{n_k}.$ Then, $ \displaystyle \bar F^c(x_k) + \frac{\sigma}{\sqrt{n_k}} \sqrt{\ln \frac{1}{\bar\de}} =\max_{i = 1,\ldots,m} \hat F^i(x_k)$  is an upper bound on $f^{c}(x_k)$ (\ref{eq:fc}).  
Finally, note that  the properties of \textit{smoothed} functions defined in Section \ref{sec:oracle} (proved in \citep{flaxman2005online,hazan2016graduated}) imply
$|f^c_{\nu}(x_k) - f^c(x_k)| \leq \nu L.$ Consequently, we can bound $\max\{f^c_{\nu}(x_k),f^c(x_k)\}  \leq f^c(x_k) + \nu L.$ Then, bound (\ref{eq:fc}) combined with the above directly implies the bound: 
\begin{align}\label{eq:1}
\Prob \left\{
\max\{f^c_{\nu}(x_k),f^c(x_k)\}  \leq 
\hat F^c_{\nu}(x_k)
\right\} \leq 1-\bar \de.\end{align}
For the absolute values $ |f^c_{\nu}(x_k)|, | f^c(x_k)|$ we have:
\begin{align}\label{eq:2}
 \max\{f^c_{\nu}(x_k),f^c(x_k)\} = -\min\{- f^c_{\nu}(x_k), - f^c(x_k)\} \leq -\min\{|f^c_{\nu}(x_k)|, | f^c(x_k)|\}. 
\end{align}
From the definition of $\hat\alpha_k$ it follows that:
\begin{align}\label{eq:3}
-\hat \alpha_k = -| \hat F^c_{\nu}(x_k) | \leq  -\hat F^c_{\nu}(x_k);
\end{align}
Then, combining the inequalities (\ref{eq:1}-\ref{eq:3}) together, we obtain that $\hat \alpha_k$ indeed expresses a probabilistic lower bound on both $|f^c_{\nu}(x_k)|$ and $ |f^c(x_k)|$:
$$\Prob\left\{\hat \alpha_k\leq \min\{|f^c_{\nu}(x_k)|, |f^c(x_k)|\}\right\}\geq 1-\bar\de.$$
In Figure \ref{fig:illustration} below, we graphically illustrate the relations between $f^c(x_k), f^c_{\nu}(x_k)$, and $-\hat \alpha_k = \hat F^c_{\nu}(x_k)$, which can help to understand the proofs above.
\begin{figure}[H]\centering
\includegraphics[width =0.9\textwidth]{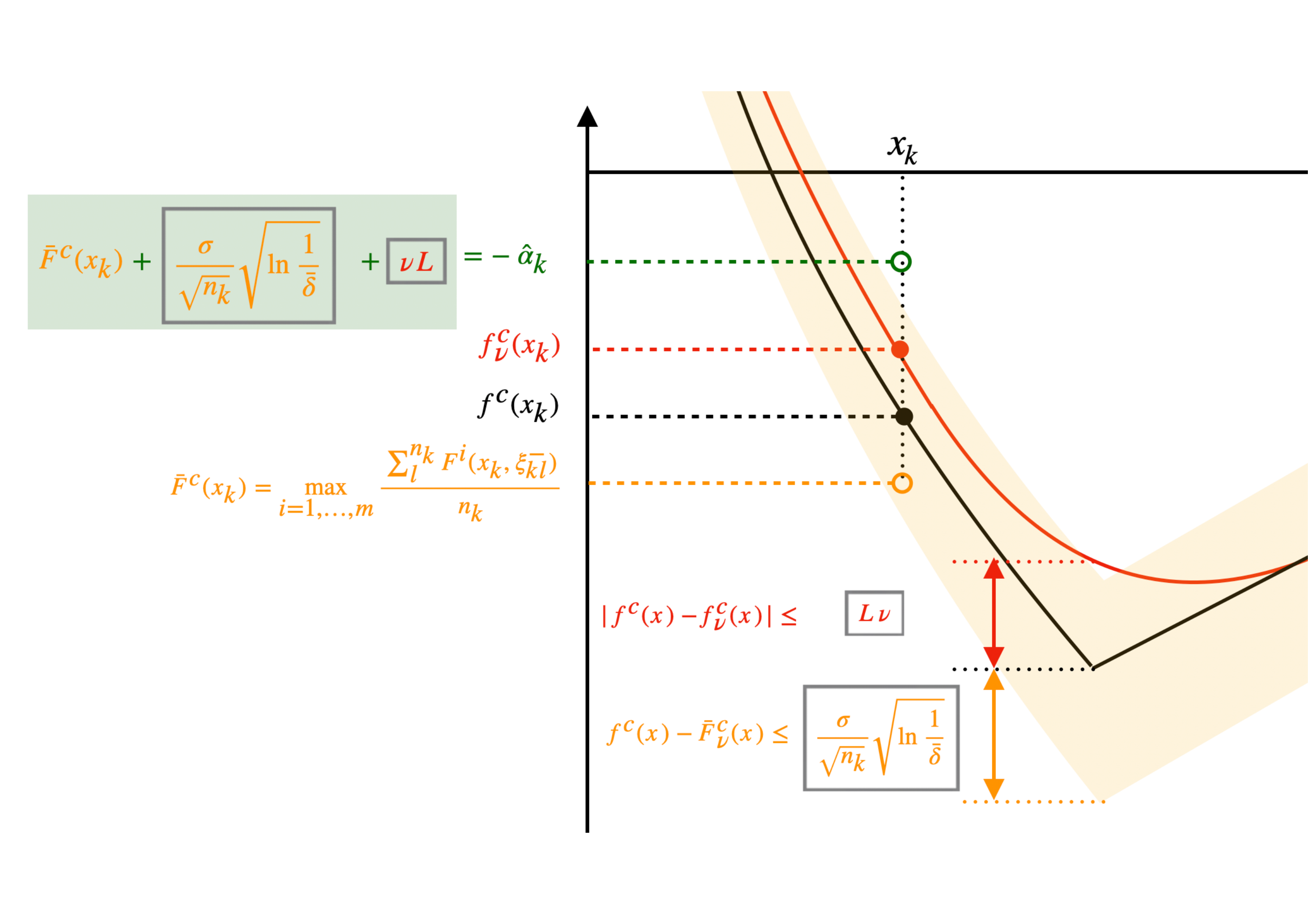}\caption{Illustration of constraint function $f^c(x_k)$, smoothed constraint function $f^c_{\nu}(x_k)$, upper bound $-\hat\alpha_k$. The dots with empty interior represent random values $-\hat\alpha_k$, $\bar F^c(x_k)$, the dots with filled interior represent deterministic values $f^c(x_k)$, $f^c_{\nu}(x_k)$.  The orange area represents the set in which $\bar F^c(x)$ lie with high probability.}
\label{fig:illustration}
\end{figure}

\section{%B: 
Proof of Fact \ref{lemma:0}}\label{proof:B} 
% \begin{proof}
The  deviation of the gradient estimators $\Delta^j_k = G^j(x_k,\nu) - \nabla f^j_{\nu}(x_k)$, by definition can be expressed as follows for $j = 0,c$
\begin{align}\label{eq:delta_0}
\Delta^j_k
= \frac{1}{n_k}\sum_{l=1}^{n_k}
% \underbrace{
\left[\underbrace{\left(d\frac{f^j(x_{k}+\nu s_{kl}) - f^j(x_k)}{\nu}s_{kl} - \nabla f^j_{\nu}(x_k)\right)}_{v_l^j} +  \underbrace{d\frac{\xi_{kl}^{j+} - \xi_{kl}^{j-} }{\nu}s_{kl}}_{u_l^j}\right],%}_{z_l},
\end{align}
where the first term under the summation $v_l^j$ is dependent only on random $s_{kl}$, however the second term is dependent on both random variables coming from the noise $\xi^{j\pm}_{kl}$ and from the direction $s_{kl}$.  In the above 
\begin{align}
\label{eq:xi1}
\xi^{0-}_{kl} &:= F^0(x_k, \xi^{0-}_{kl}) - f^0(x_k)\\
\label{eq:xi2}
\xi^{0+}_{kl} &:= F^0(x_k +\nu  s_{kl}, \xi^{0+}_{kl}) - f^0(x_k +\nu  s_{kl})\\
\label{eq:xi3}
\xi^{c-}_{kl} &:= \max_{i = 1,\ldots,m} F^i(x_k, \xi^{i-}_{kl}) - \max_{i = 1,\ldots,m} f^i(x_k)\\
\label{eq:xi4}
\xi^{c+}_{kl} &:= \max_{i = 1,\ldots,m} F^i(x_k +\nu  s_{kl},  \xi^{i+}_{kl}) - \max_{i = 1,\ldots,m} f^i(x_k +\nu  s_{kl})
\end{align} 
 First, we are going to bound the variance $\E\|\Delta_{k}^j\|^2$. Using (\ref{eq:delta_0}), we obtain
\begin{align}\label{eq:delta_var}
n_k^2\E\|\Delta_k^j\|^2 &= \E\|\sum_{l=1}^{n_k} v_l^j\|^2 + \E\|\sum_{l=1}^{n_k} u_l^j\|^2 + 2\E \la\sum_{l=1}^{n_k}  v_l^j, \sum_{l=1}^{n_k} u_l^j\ra \\
&= \E\|\sum_{l=1}^{n_k} v_l^j\|^2 + \E\|\sum_{l=1}^{n_k} u_l^j\|^2 + 2\E\sum_{l=1}^{n_k}  \la v_l^j,  u_l^j\ra + 2\E\sum_{m\neq l}^{n_k,n_k} \la v_m^j,  u_l^j\ra.
\end{align}
Next, we are going to bound each of the terms in the summand above. 
\paragraph{$\E\|\sum_{l=1}^{n_k} v_l^j\|^2$:} From Stokes' theorem \citep{flaxman2005online} we know that $\E v_l^j = 0$. 
Using $L$-Lipschitzness of $f^j(x)$ for both $j = 1,c$ we can derive the following bound on its norm $\|v_l^j\|$ : 
\begin{align}\label{eq:||v_l^j||}
\|v_l^j\| = \left\|d\frac{f^j(x_k+\nu s_{kl}) - f^j(x_k)}{\nu} s_{kl} - \nabla f^{j}_{\nu}(x_k)\right\| \leq (d+1)L.
\end{align} 
Hence, $v_{l}^j$ is centered bounded random vector, and $\|v_l^j\|\leq (d+1)L$ holds with probability 1. 
Since $\{v_l^j\}_{l=1,\ldots,n_k}$ are i.i.d. zero-mean  variables, we have 
\begin{align}\label{eq:E||v_l^j||}
\E\left\|\sum_{l=1}^{n_k} v_l^j\right\|^2 = \sum_{l=1}^{n_k}\E\|v_l^j\|^2 = n_k \E\|v_l^j\| \leq n_k(d+1)^2L^2
\end{align} % $(d+1)L$-sub-Gaussian.  
Thus, random variable $\sum_{l=1}^{n_k} v_l^j$ is bounded and zero-mean with $\E\|\sum_{l=1}^{n_k} v_l^j\|^2 \leq n_k(d+1)^2L^2$. 

\paragraph{$\E\|\sum_{l=1}^{n_k} u_l^j\|^2$:}
First, we build the bounds on $\xi^{0\pm }_{kl}$ and $\xi^{c \pm}_{kl}$ (\ref{eq:xi1}-\ref{eq:xi4}): \begin{itemize}
\item[1)] Recall that  $\xi^{0\pm}_{kl}$(\ref{eq:xi1},\ref{eq:xi2}) are zero-mean $\sigma^2$-sub-Gaussian independent random variables. and hence, from (\ref{eq:hoeffding}) it follows that
\begin{align}\E \xi^{0\pm}_{kl}=0,~\E(\xi^{0\pm}_{kl})^2\leq \sigma^2,
~\Prob\left\{\left|\frac{1}{n_k}\sum_{l= 1}^{n_k}\xi^{0\pm}_{kl}\right| \leq \frac{\sigma}{\sqrt{n_k}} \sqrt{\frac{1}{\bar \de}} \right\}\geq 1-\bar\de . 
\end{align}
\item[2)]For $\xi^{c\pm}_{kl} $(\ref{eq:xi3}, \ref{eq:xi4}) we have
 $\displaystyle |\xi^{c\pm}_{kl}| \leq \max_{i = 1,\ldots,m}|\xi^{i\pm}_{kl}|.$ 
 Indeed, recall that $$\xi^{c -}_{kl} = \max_{i = 1,\ldots,m} (f^i(x_k) +  \xi^{i-}_{kl}) - \max_{i = 1,\ldots,m} f^i(x_k).$$ Note that $$\displaystyle \max_{i = 1,\ldots,m} f^i(x_k) -  \max_{i = 1,\ldots,m} |\xi^{i-}_{kl}|\leq \max_{i = 1,\ldots,m} (f^i(x_k) +  \xi^{i-}_{kl})\leq \max_{i = 1,\ldots,m} f^i(x_k) +  \max_{i = 1,\ldots,m} |\xi^{i-}_{kl}|.$$ Hence,  $\displaystyle |\xi^{c-}_{kl}| =  \left|\max_{i = 1,\ldots,m} (f^i(x_k) +  \xi^{i-}_{kl}) - \max_{i = 1,\ldots,m} f^i(x_k)\right|\leq \max_{i = 1,\ldots,m} |\xi^{i-}_{kl}|.$ The same holds for $\xi^{c+}_{kl}.$ Consequently $\E|\xi^{c\pm}_{kl}|^2 \leq \sigma^2.$ 
\end{itemize}
From the above, using the independence of $\xi^{j\pm}_{kl}$ and $s_{kl}$ we derive
\begin{align}\label{eq:E u_l^j}
\E\sum_{l = 1}^{n_k} u_l^j = \frac{d}{\nu} \E\left(\sum_{l = 1}^{n_k} (\xi^{j+}_{kl}  -  \xi^{j-}_{kl}) s_{kl}\right) = 0.\end{align}
Also, using $\|s_{kl}\| = 1$ we obtain
\begin{align}\label{eq:E||u_l^j||}
\E \left\|\sum_{l = 1}^{n_k} u_l^j\right\|^2 = \E \frac{d^2}{\nu^2}\left\|\sum_{l = 1}^{n_k} (\xi^{j+}_{kl} - \xi^{j-}_{kl} ) s_{kl}\right\|^2 \leq n_k \frac{d^2\sigma^2}{\nu^2}.\end{align}

 \paragraph{ $\displaystyle 2\E \left\la\sum_{l=1}^{n_k}  v_l^j, \sum_{l=1}^{n_k} u_l^j\right\ra$: } 

This expression can be split into the following terms $$2\E \left\la\sum_{l=1}^{n_k}  v_l^j, \sum_{l=1}^{n_k} u_l^j\right\ra =  \E \left(2 \sum_{l=1}^{n_k}  \la v_l^j, u_l^j\ra + 2\sum_{m\neq l}^{n_k} \la v_m^j,  u_l^j\ra\right) = 2\sum_{l=1}^{n_k} \E \la v_l^j,  u_l^j\ra + 2\sum_{m\neq l}^{n_k} \E \la v_m^j,  u_l^j\ra.$$
For $m \neq l$ we have that $v_m^j$ and $u_l^j$ are independent, thus 
$$2\sum_{l=1}^{n_k}  \E\la v_l^j,  u_l^j\ra + 2\sum_{m\neq l}^{n_k} \la \E v_m^j,  \E u_l^j\ra = 2\sum^{n_k}_{l=1}  \E\la v_l^j,  u_l^j\ra.$$
Here each summand is bounded by
\begin{align*} \E\la v_l^j,  u_l^j\ra &=2\frac{d^2}{\nu^2} \E(\xi^{j+}_{kl} - \xi^{j-}_{kl}) (f^j(x_{k}+\nu s_{kl}) - f^j(x_k))\la s_{kl}, s_{kl}\ra - \la \nabla f^j_{\nu}(x_k), d\frac{\xi_{kl}^{j+} - \xi_{kl}^{j-}}{\nu} s_{kl} \ra\\ 
&= 
2\frac{d^2}{\nu^2} \E(\xi^{j+}_{kl} - \xi^{-}_{kl}) (f^j(x_{k}+\nu s_{kl}) - f^j(x_k)) \leq 2\frac{d^2}{\nu^2} \E|\xi^{j+}_{kl} - \xi^{j-}_{kl}|\E |f^j(x_{k}+\nu s_{kl}) - f^j(x_k)| \end{align*}
For the case $j = 0$ we have $\E\la v_l^0,  u_l^0\ra \leq 0.$ For the case $j = c$ we have $\E\la v_l^c,  u_l^c\ra \leq 4\frac{d^2}{\nu^2}\sigma L \nu.$
\paragraph{$\|\Delta_k^j\|$:}
Combining the bounds obtained above with (\ref{eq:delta_var}), we get the final bound on $\E\|\Delta_k^j\|^2:$
\begin{align}\label{eq:expectation_de}
\E \|\Delta^j_k\|^2  
&\leq \frac{1}{n_k}\left((d+1)^2L^2 + \frac{2 d^2 \sigma^2}{\nu^2} + \frac{4 d^2 \sigma L}{\nu}
\right) \leq \frac{(d+1)^2}{n_k}\left(L + \frac{\sqrt{2}\sigma}{\nu}\right)^2 
.\end{align} 
Note that $\E \Delta_{k}^j = \frac{1}{n_k}\sum_{l=1}^{n_k} (\E v_l^j + \E u_l^j) = 0$ and that $\Delta_{k}^j$ is sub-Gaussian random vector due to the fact that $v_l^j$ and $u_l^j$ are zero-mean sub-Gaussian. From property  (\ref{eq:hoeffding}) of sub-Gaussian variables %and triangle inequality
we have
\begin{align}
\displaystyle \Prob\left\{\|\Delta^j_k\|\leq   
\frac{d+1}{\sqrt{n_k}}\left(L + \frac{\sqrt{2}\sigma}{\nu}\right)\sqrt{\ln\frac{1}{\bar\de}}\right\} 
\geq 1-\bar\de.
\end{align}
Substituting our notation of $\Sigma: = (d+1)\sqrt{\ln\frac{1}{\de} + \ln (2K+1)}\left(\sqrt{2}\sigma + L \nu\right)$ defined in Section \ref{sec:safety}, we get
\begin{align}\label{eq:deltak}
\displaystyle \Prob\left\{\|\Delta^j_k\|\leq  \frac{\Sigma}{\nu\sqrt{n_k}}\right\} 
\geq 1-\bar\de.
\end{align}
 Recall $\bar \delta = \frac{\delta}{2K+1}\leq \frac{\delta}{K} .$ Using the probability of union of the above events $A_k$ defined in (\ref{eq:deltak}) inequality 
 \begin{align}\label{eq:union}
 \displaystyle 1-\Prob\left\{\cup_{k = 1}^{K} A_k\right\} \geq 1-\sum_{k = 1}^K \Prob\{A_k\},
 \end{align}
 we get the result holding for $K$ points with probability $1-K\bar\de \geq 1-\de$.

\section{%C: 
Proof of Fact \ref{lemma:2}}\label{proof:C}
Here we derive a bound on the norm of the deviation $\zeta_k = g_k - \nabla B_{\eta,\nu}(x_k)$. By definition, we have:
\begin{align}\label{eq:zeta}
&\|\zeta_k\| = \left\|g_k - \nabla B_{\eta,\nu}(x_k)\right\| =  \left\|G^0(x_k, \nu,\xi_k^0,s_k) - \nabla f^0_{\nu}(x_k)  + \eta\left(\frac{G^c(x_k, \nu, s_k, \xi_k)}{|\hat F^c_{\nu}(x_k)|} - \frac{\nabla f^c_{\nu}(x_k)}{|\hat f^c_{\nu}(x_k)|} \right)\right\|\\
&
\leq \|\Delta^0_k\| + \eta\left\|\frac{G^c(x_k, \nu, s_k, \xi_k)}{\hat \alpha_k} \pm \frac{\nabla f^c_{\nu}(x_k)}{\hat\alpha_k} - \frac{\nabla f^c_{\nu}(x_k)}{\alpha_k} \right\|
\leq \|\Delta_k^0\| + \eta \frac{\|\Delta_k^c\|}{\alpha_k} + \eta \frac{\|\nabla f^c_{\nu}(x_k)\|(\hat \alpha_k - \alpha_k) }{\hat \alpha_k^2}.\nonumber
\end{align}
 Recall the notation $\Sigma:= (d+1)\sqrt{\ln\frac{1}{\bar\de} + \ln K}\left(\sigma + L \nu\right).$ 
 Next, we combine the result of Fact \ref{lemma:0} with the above bound (\ref{eq:zeta}), and use the choice $\nu= \frac{C\eta}{L}.$
We obtain that with probability greater than $1-\bar\de$ the following holds:
\begin{align*}
\|\zeta_k\| &
\leq \frac{\Sigma}{\nu\sqrt{n_k}}\left(1+\frac{\eta}{\alpha_k}\right) + \frac{\eta L}{\hat\alpha_k^2}\frac{\sigma\sqrt{\ln\frac{1}{\de}}}{\sqrt{n_k}}\frac{\nu}{\nu}
\leq \frac{1}{\nu\sqrt{n_k}}\left(\Sigma+\frac{\Sigma \eta}{\alpha_k} + \frac{\sigma\sqrt{\ln\frac{1}{\de}} C\eta^2}{\hat\alpha_k}\right) \leq \frac{\Sigma}{\nu\sqrt{n_k}}\left(1+\frac{ \eta}{\hat\alpha_k} + \frac{C\eta^2}{\hat\alpha_k^2}\right).
\end{align*} 
In expectation, respectively, using (\ref{eq:expectation_de}) we get
\begin{align*}
\E\|\zeta_k\| &
\leq \frac{(d+1)}{\sqrt{n_k}}\left(L + \frac{\sqrt{2}\sigma}{\nu}\right)\left(1+\frac{\eta}{\alpha_k}\right) + \frac{\eta L}{\hat\alpha_k^2}\frac{\sigma}{\sqrt{n_k}} \leq \frac{(d+1)(\sqrt{2}\sigma+L\nu)}{\nu\sqrt{n_k}}\left(1+\frac{ \eta}{\hat\alpha_k} + \frac{C\eta^2}{\hat\alpha_k^2}\right).
\end{align*}
For the cases when $\hat \alpha_k\geq C\eta$, and using the same union probability inequality as in (\ref{eq:union}), we obtain the bounds:
\begin{align*}
   &\Prob\left\{
   \|\zeta_k\|  \leq \frac{\Sigma}{\nu\sqrt{n_k}}
   \left(
  1+\frac{ 2\eta}{\hat\alpha_k} 
  \right) 
~ \forall k\leq K 
   \right\}\geq 1-\bar\de.\\
   &\E\|\zeta_k\| \leq  \frac{(d+1)(\sqrt{2}\sigma+L\nu)}{\nu\sqrt{n_k}}\left(1+\frac{ 2\eta}{\hat\alpha_k} \right).
\end{align*}
% \qed

\section{%D: 
Proof of Lemma \ref{lemma:3}}\label{proof:D}
Recall that $\de = (2K+1)\bar\de$ from our notation (\ref{def:barde}). Let us denote by $A^{\alpha}_{k}$ the event $[\hat \alpha_k\geq C\eta]$ and by $A^{c}_{k}$ the event $[-f^c_{\nu}(x_k)\geq 2C\eta]$. Our goal is to show that $\alpha_k$ as well as $f^c_{\nu}(x_k)$ satisfy the above bounds  for all $k\leq K$ with high probability: $\Prob\{\cap_{k=1}^K A^c_k \cap A^{\alpha}_k\} \geq 1-\de.$ The plan of the proof is as follows. First, we show that for $n_k  \geq \frac{4\Sigma^2 (C+1)^2}{\nu^2C^2L^2}$ and for any $x_k$ we have the relation: 
\begin{align}\label{base}
     \Prob\left\{A^{\alpha}_{k}|A^{c}_{k}\right\}\geq 1-\bar\de,
\end{align} 
In particular, for iteration $k = 0$ the above (\ref{base}) combined with the condition of the lemma $\Prob\{A^c_0\} =1$ directly implies:
\begin{align}
    \Prob\{A^c_0\cap A^{\alpha}_0\} = \Prob\{A^{\alpha}_0|A^c_0\}\Prob\{A^c_0\}= 1-\bar\de.
\end{align}
Next, we are going to show that with high probability the conditions satisfaction $A^\alpha_k, A^c_k$ at iteration $k$ imply the similar events at iteration $k+1$ \begin{align}\label{induction}
\Prob\left\{A^{\alpha}_{k+1} \cap A^{c}_{k+1}| A^{\alpha}_{k} \cap A^{c}_{k}\right\} \geq (1-\bar\de)^2.\end{align}
And finally, since each iteration $x_{k+1}$ is dependent only on the observations of the previous iteration $x_{k}$,  (\ref{induction}) will lead to:
 $$\Prob\{\cap_{k=1}^K [A^{\alpha}_{k} \cap A^{c}_{k}]\} = 
 \prod_{k=0}^{K-1}\Prob\{A^{\alpha}_{k+1} \cap A^{c}_{k+1}| A^{\alpha}_{k} \cap A^{c}_{k}\}\Prob\{A^{\alpha}_0 \cap A^{c}_0\} \geq (1-\bar\de)^{2K+1} \geq 1-(2K+1)\bar\de,$$ where the last inequality holds due to Bernoulli's inequality. This is exactly the statement of the lemma. It is left to prove the statements  (\ref{base}) and  (\ref{induction}). 
 \paragraph{Proof of (\ref{base}).}
 By definition, (\ref{base}) is equivalent to
$$\Prob\left\{\hat \alpha_k \geq C\eta|-f^c_{\nu}(x_k) \geq  2C\eta \right\}\geq 1-\bar\de.$$
 Recall that 
$\nu = \frac{C\eta}{L} $, $\sqrt{n_k}\geq \frac{C+1}{C} \frac{2 \Sigma }{\nu L}$, from which it follows that  $\frac{\Sigma}{\sqrt{n_k}}\leq \frac{ \nu L }{2}.$ 
Then, from the definition of $\hat\alpha_k$, we indeed have \begin{align}\label{eq:alpha}
    \Prob\left\{
    -f^c_{\nu}(x_k)\geq \hat\alpha_k \geq -f^c_{\nu}(x_k) - \frac{\Sigma}{\sqrt n_k} - \frac{\nu L}{2}\right\} \geq 1-\bar\de ~~\Rightarrow ~~\Prob\left\{\hat\alpha_k \geq  C\eta |  -f^c_{\nu}(x_k) \geq 2C\eta\right\}\geq 1-\bar\de.
\end{align}
 
 \paragraph{Proof of (\ref{induction}).} Assuming that both $A^{\alpha}_k$ and $A^c_k$ hold, we are going to prove that $\Prob\{A^{\alpha}_{k+1}\cap A^{c}_{k+1} \}\geq 1-\bar\de.$ In particular, assume 
 \begin{align}\label{ass}
 \hat\alpha_k\geq \eta C\text{ ~and }-f^c_{\nu}(x_k) \geq 2\eta C.
 \end{align}
We split the above condition (\ref{ass}) into two cases:  $-f^c_{\nu}(x_{k})\geq 4C\eta$ and $4C\eta\geq -f^c_{\nu}(x_{k})\geq 2C\eta$.\\
\begin{itemize}
\item[\textbf{Case 1:}]  Consider $-f^c_{\nu}(x_k) \geq 4C\eta$. After one step with the step size $\gamma_k$ the value $-f^c_{\nu}(x)$ cannot decrease more than twice: 
\begin{align}
\label{}
-f^c_{\nu}(x_{k+1}) 
\geq -f^c_{\nu}(x_k)/2 \geq 2C\eta.
\end{align} This is due to the chosen step size such that $\gamma_k\leq \frac{\alpha_k}{2L\|g_k\|}$ and $L$-Lipschitz continuity of $f^c_{\nu}(x).$ In particular, from Lipschitz continuity it follows that 
\begin{align*}
f^c_{\nu}(x_{k+1}) &\leq f^c_{\nu}(x_k) - \gamma_k\la g_k,\nabla f^c_{\nu}(x_k) \ra \leq  f^c_{\nu}(x_k) + \frac{- f^c_{\nu}(x_k)}{2L\|g_k\|} L\| g_k\|  \leq \frac{f^c_{\nu}(x_k)}{2}. \end{align*}
Thus, indeed, for the first case we get
\begin{align*}
-f^c_{\nu}(x_{k+1}) &\geq \frac{-\hat f(x_k)}{2} \geq 2C\eta.
\end{align*} 
\item[\textbf{Case 2:}] Now, consider that   $2C\eta \leq -f^c_{\nu}(x_k)\leq 4C\eta.$  Hence, 
\begin{align}
\label{eq:al2}
C\eta\leq\hat\alpha_k \leq -f^c_{\nu}(x_k)\leq 4C\eta.
\end{align}
The above implies that $ -f^c_{\nu}(x_k) \leq 4C\eta \leq \eta$, since $4C = \frac{4l^2}{8 L^2} \leq \frac{1}{2} < 1.$ Then, by Assumption 2 and Property (3) of the smoothed function,  $l\leq\|\nabla f^c_{\nu}(x_k)\|\leq L.$ Thus, the step direction $g_k = \nabla B_{\eta,\nu}(x_k) + \zeta_k$ is a descent  direction, namely,  
$\la -g_k, -\nabla f^c_{\nu}(x_k) \ra \geq 0.$ In particular, it can be bounded by: 
\begin{align*}
&\la g_k, \nabla f^c_{\nu}(x_k) \ra = 
\la\nabla B_{\eta,\nu} +  \zeta_k, \nabla f^c_{\nu}(x_k) \ra   = 
\la\nabla f^0(x_k) +\eta\frac{\nabla f^c_{\nu}(x_t)}{\hat\alpha_k} + \zeta_k, \nabla f^c_{\nu}(x_k)\ra \\
&\geq  \frac{\eta}{ \hat \alpha_k}\|\nabla f^c_{\nu}(x_k)\|^2 -\|\nabla f^0(x_k)\| \| \nabla f^c_{\nu}(x_k)\| -\| \zeta_k\|\|\nabla f^c_{\nu}(x_k)\|\\
& \geq  \|\nabla f^c_{\nu}(x_k)\|\left(  \frac{\eta}{ \hat \alpha_k}\|\nabla f^c_{\nu}(x_k)\| - \| \zeta_k\| - \|\nabla f^0(x_k)\|  \right).
%&\geq\frac{\|\nabla f^c_{\nu}(x_k)\|^2}{F/(L + \| \zeta_k\|)(1 + L/F) } -\|\nabla f^0(x_k)\| \| \nabla f^c_{\nu}(x_k)\| -\| \zeta_k\|\|\nabla f^c_{\nu}(x_k)\|\\
%&\geq \|\nabla f^c_{\nu}(x_k)\|(L + L^2/F+ \| \zeta_k\| +  \| \zeta_k\| L/FC- L- \|\zeta_k\| ) \\
%&\geq \frac{L}{F} (L + \|\zeta_k\|)\|\nabla f^c_{\nu}(x_k)\| \geq (L + \|\zeta_k\|)L.
\end{align*}
The step size $\gamma_k$ is such that the smoothed constraint $f^c_{\nu}(x)$ decreases along it. To prove that, we use $M_{\nu}$ smoothness of $f_{\nu}^c(x)$:
\begin{align*}
f^c_{\nu}(x_{k+1}) = f^c_{\nu}(x_k + \gamma_k g_k)
&\leq f^c_{\nu}(x_k) - \gamma_k \la  g_k, \nabla f^c_{\nu}(x_k) \ra + \frac{M_{\nu}}{2} \gamma_k^2\| g_k\|^2 
% \\
% &\leq f^c_{\nu}(x_k) - \gamma_k  \la  g_k, \nabla f^c_{\nu}(x_k) \ra   + \frac{M_{\nu}}{2} \gamma_k^2\| g_k\|^2.
\end{align*} 
Hence, the descent is bounded by
\begin{align*}
f^c_{\nu}(x_{k+1}) - f^c_{\nu}(x_k) & \leq - \gamma_k L \|\nabla f^c(x_k)\| + \frac{M_{\nu}}{2} \gamma_k^2\| g_k\|^2 \leq -\gamma_k\left(  \la  g_k, \nabla f^c_{\nu}(x_k) \ra  - \frac{M_{\nu}}{2}\gamma_k \|g_k\|^2\right) .
\end{align*}
Recall that $\gamma_k = \frac{1}{\|g_k\|}
\min\left\{\frac{\hat \alpha_k}{2Lk^{2/5}},\frac{1}{k^{3/5}}\right\}$, $ M_{\nu} \leq \frac{\sqrt{d}L}{\nu}$, and $ \nu = \frac{C \eta}{L}.$ 
Hence:
\begin{align}\label{eq:fbound}
f^c_{\nu}(x_{k+1}) - f^c_{\nu}(x_k) &  \leq -\gamma_k\left(  \la  g_k, \nabla f^c_{\nu}(x_k) \ra  - \frac{\sqrt{d}L}{2\nu}\min \left\{\frac{\hat \alpha_k}{2L k^{2/5}},\frac{1}{k^{3/5}}\right\}\|g_k\|\right) \nonumber\\
&\leq -\gamma_k
\left\la 
\nabla f^c_{\nu}(x_k), \frac{\eta}{ \hat \alpha_k}
\nabla f^c_{\nu}(x_k) 
+  \zeta_k 
+ \nabla f^0_{\nu}(x_k) \right\ra
+ \gamma_k \frac{\sqrt{d}L \hat\alpha_k \|g_k\|}{4\nu L k^{2/5}} \nonumber\\
& \leq \gamma_k \left( -\frac{\eta}{ \hat \alpha_k}\|\nabla f^c_{\nu}(x_k)\|^2 + \| \zeta_k\|L + \|\nabla f^0_{\nu}(x_k)\|L  + \frac{\|g_k\|\sqrt{d}L}{4 k^{2/5}} \right)\nonumber\\
& \leq \gamma_k 
\left(-\frac{1}{C}l^2 + 2L^2  + \frac{(2+1/C)\sqrt{d}L^2}{4 k^{2/5}} \right)\nonumber
\end{align}
In the above, the second inequality is due to the fact that $\|g_k\|$ is upper bounded by
\begin{align}
&g_k = \nabla f^0_{\nu}(x_k) + \frac{\eta}{\hat \alpha_k}\nabla f^c_{\nu}(x_k)  + \zeta_k,
\end{align}
and the last inequality is due to (\ref{eq:al2}) and Assumption 2. 
Additionally, for  $n_k\geq \left(\frac{C+1}{C}\right)^2 \frac{4\Sigma^2}{\nu^2 L^2}$, using Fact \ref{lemma:2}  we get $\Prob\left\{\|\zeta_k\| \leq \frac{\Sigma}{\nu\sqrt{n_k}} (1+\frac{1}{C})\leq \frac{L}{2} \right\}\geq 1-\bar\de,$ from which the above 
% bound (\ref{eq:cbound}) 
follows directly.

Recall from the definition of the Lemma \ref{lemma:3} that $C:=\frac{l^2}{8L^2}.$  Then starting from $k \geq (\sqrt{d}( 2+1/C))^{2.5},$
% Then, we get that $C$ is a lower bound with high probability on: \begin{align}\label{eq:cbound}
% \Prob \left\{C = \frac{7l}{54 L} = \frac{7l}{36 \cdot 1.5 \cdot L}\leq \frac{7l}{36(L +\|\zeta_k\|)}\right\}\geq 1-\bar\de.\end{align} 
% \begin{align}
% &\leq 
% -\gamma_k 
% \left(\frac{\eta}{\hat \alpha_k}\|\nabla f^c_{\nu}(x_k)\| - \| \zeta_k\| - \|\nabla f^0_{\nu}(x_k)\| - \frac{\eta}{8\hat \alpha_k}\|\nabla f^c_{\nu}(x)\|  - \|\zeta_k\| /8 - \|\nabla f^0_{\nu}(x_k)\|/8 \right)\nonumber\\
% & \leq -\gamma_k \left(\frac{7}{8}\frac{\eta}{ \hat \alpha_k}\|\nabla f^c_{\nu}(x_k)\| - \frac{9}{8}\left(\|\zeta_k\| + \|\nabla f^0_{\nu}(x_k)\|\right)\right)\nonumber\\
% & \leq -\frac{9\gamma_k L}{8}\left(\frac{7}{9}\frac{l}{ 4C} - \left(\|\zeta_k\| + L\right)\right)
% \end{align}
% Combining (\ref{eq:cbound}) and (\ref{eq:fbound}),
we obtain
$$\Prob\left\{f^c_{\nu}(x_{k+1}) - f^c_{\nu}(x_k)
% \leq -\frac{9\gamma_k L}{8}\left(\frac{7l}{36 C} - \left(\|\zeta_k\| + L\right)\right) 
\leq 0\right\}\geq 1-\bar \de.$$
Thus, for the second case we also get $$\Prob\left\{ -f^c_{\nu}(x_{k+1}) \geq  -f^c_{\nu}(x_k) \geq 2C\eta\right\}\geq 1-\bar\de.$$ 
\end{itemize}
We obtained that for both cases, given $A^{\alpha}_k$ and $A^c_k$ hold, for $x_{k+1}$ generated by ZeLoBa we can guarantee 
%\begin{align}\label{eq:c}
$\Prob\{A^c_{k+1}| A^{\alpha}_{k} \cap A^{c}_{k}\} \geq 1-\bar\de.$ Finally, using (\ref{eq:alpha})  we get $$\Prob\{ A^{\alpha}_{k+1} \cap A^{c}_{k+1} | A^{\alpha}_{k} \cap A^{c}_{k} \} = \Prob\{ A^{\alpha}_{k+1}| A^{c}_{k+1}\} \Prob\{A^c_{k+1}| A^{\alpha}_{k} \cap A^{c}_{k}\}\geq (1-\bar\de)^2.$$
This concludes the proof of (\ref{induction}).
% \qed

\section{%E: 
Proof of Proposition \ref{proposition}} \label{proof:E}
% \begin{proof}
Recall that $n_k\geq \frac{4C^2\Sigma^2}{\nu^2(C+1)^2L^2}.$ First, we demonstrate that all the measurement points $x_k + \nu s_{kj}$ and the next iterate $x_{k+1}$ with high probability lie inside the ball centered in $x_k$ with radius $\frac{-f^c(x_k)}{L}$. Indeed, based on Lemma \ref{lemma:3} for $\nu = \frac{C\eta}{L}$ it holds that \begin{align}\label{eq:11}
\Prob\left\{\|x_k - (x_k + \nu s_{kj})\| \leq \frac{-f^c(x_k)}{L}\right\}\geq 1-\bar\de.\end{align} Similarly, from the definition of the step size $\gamma_k$ it follows that 
\begin{align}\label{eq:22}
\Prob\left\{\|x_k - x_{k+1}\| \leq \frac{-f^c(x_k)}{2L}\right\}\geq 1-\bar\de.
\end{align}
Next, we show that for any point $y \in \R^d$ satisfying $\|x_k-y\| \leq \frac{\min_i-f^i(x_k)}{L}$ we have
$\forall i = 1,\ldots,m,~  $
\begin{align}\label{eq:33}
 f^i(y) &
 \leq   f^i(x_{k}) + \la \nabla f^i(x_{k}) , y - x_k\ra \leq  f^i(x_{k}) + L \|y - x_k\|  \leq  f^i(x_{k}) + L \frac{-f^i(x_k)}{L}\leq 0.
\end{align}
Moreover, if it satisfies $\|x_k-y\| \leq \frac{\min_i-f^i(x_k)}{2L}$ we have
$\forall i = 1,\ldots,m,~  $
\begin{align*}
 f^i(y) &\leq  f^i(x_{k}) + L \frac{-f^i(x_k)}{2L}
\leq \frac{1}{2}f^i(x_{k}) \leq 0.
\end{align*}
Hence, $f^i(y) \leq \frac{1}{2}f^i(x_{t})\leq 0.$ 

Combining (\ref{eq:33}) with (\ref{eq:11}) and (\ref{eq:22}) respectively, we get
\begin{align}\label{eq:5}
\Prob\left\{f^i(x_{k} + \nu s_{kj})\leq 0|f^i(x_{k})\leq 0\right\}\geq 1-\bar\de.\end{align}
\begin{align}\label{eq:4}
\Prob\left\{f^i(x_{k+1})\leq 0|f^i(x_{k})\leq 0\right\}\geq 1-\bar\de.\end{align}
Using the fact that the process is Markovian, using (\ref{eq:4}) we derive
\begin{align}\label{eq:6}
    \Prob\left\{f^i(x_{k})\leq 0~\forall k\leq K\right\} = \prod_{k=0}^{K-1}\Prob\left\{f^i(x_{k+1})\leq 0|f^i(x_{k})\leq 0\right\}\Prob\left\{f^i(x_{0})\leq 0\right\} \geq (1-\bar\de)^K \geq 1-\de.
\end{align}
Then, from (\ref{eq:6}) combined with (\ref{eq:5}) it follows that 
\begin{align}\label{eq:7}
    &\Prob\left\{f^i(x_{k}+\nu s_{kj})\leq 0~\forall k\leq K\right\} \\
    &\geq \prod_{k=1}^K\Prob\left\{f^i(x_{k} +\nu s_{kj})\leq 0|f^i(x_{k})\leq 0\right\} \Prob\left\{f^i(x_{k})\leq 0~\forall k\leq K\right\} \\
    &\geq (1-\bar\de)^{2K} \geq 1-\de,
\end{align}
which finalizes the proof.
% \end{proof}
%\qed

\section{%F: 
Proof of Theorem \ref{theorem:4}}\label{proof:F}
% \begin{proof}
The plan of our proof is as follows. First, using the local smoothness we bound the minimal decrease in the barrier function value after single iteration $k$ by the expression dependent on the barrier gradient norm. Next, summing these bounds together for all $k\leq K$ and noting that the barrier value cannot decrease forever, we can derive an upper bound on the weighted sum of the barrier gradient norms $\|\nabla B_{\eta,\nu}(x_k)\|$. After, we note that the above weighted sum divided by the sum of these weights expresses the expected value for the randomly selected $x_R$ defined in Step 8 of ZeLoBa algorithm. Hence, we obtain the corresponding bound on $\E_R\|\nabla f(x_R)\|$. And finally, we show that the derived bound entails the satisfaction of the approximate KKT conditions.

\paragraph{Bounding the decrease in the barrier value using the local smoothness.} 
In the paper \citep{hinder2019poly} the authors have shown that 
$ M_2(x_k) = M\left(1+\sum _{i=1}^m\frac{2\eta}{-f^i(x_k)}\right) + \sum_{i=1}^m\frac{4 \eta L}{ (-f^i(x_k))^2} $ 
represents a \textit{local} Lipschitz constant of  $\nabla B_{\eta,\nu}(x)$ at the  point $x_k$. If we replace all $f^i(x)$ by single smooth constraint $f^c_{\nu}(x)$. Then its gradient is locally $M_{2,\nu}(x_k)$-smooth at the point $x_k$, and with probability $1-\bar\de$ we can bound: 
	\begin{align}\label{l2:bound}
	M_{2,\nu}(x_k) 
	&\leq M_{\nu}\left(1+\frac{2\eta}{\hat \alpha_k}\right) + \frac{4L^2\eta}{\hat \alpha_k^2}.
	\end{align} 
	That means that the gradient of the smoothed barrier around any point $x_k\in D$ is smooth with $M_{2,\nu}(x_k).$  Recall that $\hat \gamma_k = \min\{ \frac{\hat \alpha_k}{2L k^{2/5}}, \frac{1}{k^{3/5}}\}.$ Let $\hat \gamma_k = \gamma_k\|g_k\|.$ 
	
	Then, at each iteration of \Cref{safe-barrier-0} the value of the barrier decreases at least by the following value:
	\begin{align}\label{eq:barrier_value}
&B_{\eta,\nu}(x_k) - B_{\eta,\nu}(x_{k+1}) \geq - \gamma_k \la \nabla B_{\eta,\nu}(x_k),g_k\ra - \frac{1}{2} M_{2,\nu}(x_k) \gamma_k^2 \|g_k\|^2\nonumber\\
&=\gamma_k \la \zeta_k,g_k\ra + \gamma_k\|g_k\|^2- \frac{1}{2} M_{2,\nu}(x_k) \gamma_k^2 \|g_k\|^2\nonumber\\
    &	\geq \|g_k\| \hat \gamma_k +   \frac{\la \zeta_k, g_k\ra}{\|g_k\|}\hat \gamma_k - \left(M_{\nu} + \frac{\eta M_{\nu}}{\hat\alpha_k} + \frac{4\eta L^2}{\hat\alpha_k^2} \right)\hat\gamma_k^2\nonumber\\
	&\geq \|\nabla B_{\eta,\nu}(x_k)\| \hat \gamma_k - 2\| \zeta_k\|\hat \gamma_k - \left(M_{\nu}+\frac{\eta M_{\nu}}{\hat\alpha_k} + \frac{4\eta L^2}{\hat\alpha_k^2} \right)\hat\gamma_k^2,
    \end{align}
    	where the first inequality holds due to the Taylor's theorem and $M_{2,\nu}(x)$-local smoothness of the barrier.
\paragraph{Deriving the bound on $\E\|\nabla B_{\nu,\eta}(x_R)\|.$}
	Using (\ref{eq:barrier_value}), we can derive the following bound using telescopic sum over all $k\leq K$:
	\begin{align*}
	& B_{\eta,\nu}(x_0) - B_{\eta,\nu}(x_{\eta}^*) \geq B_{\eta,\nu}(x_0) - B_{\eta,\nu}(x_K) = \sum_{k=0}^{K} \left( B_{\eta,\nu}(x_k) - B_{\eta,\nu}(x_{k+1})\right) 
	\\
	& \geq \sum_{k=0}^{K}  \|\nabla B_{\eta,\nu}(x_k)\|\hat\gamma_k - \sum_{k=0}^{K}  2\| \zeta_k\|\hat \gamma_k - \sum_{k=0}^{K} \frac{M_{\nu}\hat\gamma_k ^2}{2} - \sum_{k=0}^{K} \frac{ M_{\nu}\eta\hat \gamma_k^2}{2\hat \alpha_k}.
	\end{align*}
 We define $D_f = B_{\eta,\nu}(x_0) - B_{\eta,\nu}(x_{\eta}^*)$.
    Then, we can bound the weighted sum of $\|\nabla B_{\eta,\nu}(x_k)\|$ as follows
	\begin{align*}
	& \sum_{k=0}^{K} \|\nabla B_{\eta,\nu}(x_k)\|\hat \gamma_k \leq
	D_f+\sum_{k=0}^{K}  2\| \zeta_k\|\hat \gamma_k + \sum_{k=0}^{K} \frac{M_{\nu}\hat\gamma_k ^2}{2}+\sum_{k=0}^{K} \frac{ M_{\nu}\eta\hat \gamma_k^2}{2\hat \alpha_k} + \sum_{k=0}^{K}  \frac{4\eta L^2  \hat \gamma_k^2}{2\hat\alpha_k^2}.
	\end{align*}
Hence, if $R$ is a random variable such that $\Prob\{ R  = k\}=  \frac{\hat\gamma_k}{\sum_{k=1}^K \hat\gamma_k }$, defined in Step 8 of ZeLoBa algorithm, then
	\begin{align*}
	& 
	\E\|\nabla B_{\eta,\nu}(x_R)\| \leq \frac{1}{\sum_{k=0}^K\hat\gamma_k}\bigg(D_f+\sum_{k=0}^{K}  2\E\|\zeta_k\|\hat \gamma_k + \sum_{k=0}^{K} \frac{M_{\nu}\hat\gamma_k ^2}{2}+\sum_{k=0}^{K} \frac{ M_{\nu}\eta\hat \gamma_k^2}{2\hat \alpha_k} + \sum_{k=0}^{K}  \frac{4\eta L^2  \hat \gamma_k^2}{2\hat\alpha_k^2}\bigg).
	\end{align*}
Therefore, 
	\begin{align*}
	 \E \|\nabla B_{\eta,\nu}(x_R)\| 
	& \leq \frac{D_f}{\sum\hat\gamma_k} 
    	+\frac{\sum_{k=0}^{K}  2\E\|\zeta_k\|\hat\gamma_k}{\sum_{k=0}^K\hat\gamma_k} 
    	+\frac{\sum_{k=1}^{K} \frac{M_{\nu}\hat\gamma_k ^2}{2}(1+\frac{\eta}{\hat \alpha_k})}{\sum_{k=0}^K\hat\gamma_k}  
	    + \frac{\sum_{k=1}^{K}  \frac{4\eta L^2  \hat \gamma_k^2}{2\hat\alpha_k^2}}{\sum_{k=0}^K\hat\gamma_k}\\
	& \leq \frac{ D_f}{\sum_{k=0}^K\hat\gamma_k} 
	    + 2\max_{k\leq K}\E\|\zeta_k\| 
	    +  \frac{\sum_{k=1}^{K}\frac{ 2L\sqrt{d}}{2\nu k^{6/5}}(1+\frac{1}{C})}{\sum_{k=0}^K\hat\gamma_k}
	    + \frac{\eta\sum_{k=1}^{K}\frac{1}{2 k^{4/5}}}{\sum_{k=0}^K\hat\gamma_k}  \\
	& \leq \frac{ D_f 
	                 + \frac{L\sqrt{d}}{\nu K^{1/5} }(1+\frac{1}{C}) 
	                 +\frac{ \eta K^{1/5}}{2}
	       }{\sum_{k=0}^K\hat\gamma_k} 
	       + 2\max_{k\leq K}\E\|\zeta_k\| .
	   % \\
% 	& \leq \frac{2 L D_f}{C \eta K^{2/5}} 
% 	    + 2\max_{k\leq K}\E\|\zeta_k\| 
% 	    +  \frac{LK^{-1/5}\ln K (1+\frac{1}{C})}{4C^2 \eta^2 K^{2/5}} + \frac{L\eta K^{1/5}\ln K}{2C \eta K^{2/5}}.
	\end{align*}
 Note that $\hat \gamma_k =\frac{1}{k^{2/5}} \min\left\{\frac{\hat\alpha_k}{2L} ,\frac{1}{k^{1/5}}\right\} \geq \frac{1}{k^{3/5}}\frac{\hat\alpha_k}{2L}$, since both terms in $\min\left\{\frac{\hat\alpha_k}{2L} ,\frac{1}{k^{1/5}}\right\}$ are smaller than 1 (without loss of generality assume $f^c(x)\geq -1/L$ for all $x\in D$, otherwise we can scale the above inequality by the corresponding factor), consequently both of them are smaller than their product.
	Thus, 
	$\sum_{k = 1}^K \hat \gamma_k \geq \frac{ K^{2/5}\eta C}{2L}.$ 
	Then, we get $\frac{1}{\sum_{k = 1}^K \hat \gamma_k} \leq \frac{2L}{C K^{2/5}\eta}.$ Finally, we obtain the following bound in expectation:
\begin{align*}
    	 \E \|\nabla B_{\eta,\nu}(x_R)\| 
    	 & \leq \frac{
                	 2 L D_f 
                	 + \frac{L\sqrt{d}}{\nu K^{1/5} }(1+\frac{1}{C}) 
            	     +\frac{ \eta K^{1/5}\ln K}{2}
	         }{C \eta K^{2/5}} 
	        + 2\max_{k\leq K}\E\|\zeta_k\|.
	   %  +  \frac{LK^{-1/5}\ln K (1+\frac{1}{C})}{4C^2 \eta^2 K^{2/5}} + \frac{L\eta K^{1/5}\ln K}{2C \eta K^{2/5}}.
	\end{align*}
If $K \geq \max\left\{\left(\frac{LD_f}{C\eta^2}\right)^{5/2}, \left(\frac{L^2\sqrt{d}(1+1/C)}{C^2\eta^3}\right)^{5/3}, \left(\frac{5\ln 1/\eta}{C\eta}\right)^5\right\} = \tilde O(\frac{d^{5/6}}{\eta^5})$,  $\nu \geq \frac{C\eta}{L}$, and $n_k  \geq  
% \frac{4\left(1+1/C\right)^2\Sigma^2}{\nu^2\eta^2 }=
\frac{4\Sigma^2 (1+1/C)^2L^2}{C^2\eta^4}$,
% \geq \frac{64L^2\Sigma^2}{\nu^2 l^4},$
then 
\begin{align}\label{eq:Rbound}
    \E \|\nabla B_{\eta,\nu}(x_R)\| 
	\leq 5 \eta.
\end{align}
% 	with $ C_1 = \frac{2 L D_f}{C} + \frac{L}{C}\left(1+\frac{1}{C}\right)$ and $C_2 = \frac{1}{4C}\left(1+\frac{1}{C}\right) + \frac{L}{2C}.$ To obtain the above result, without loss of generality we considered $\eta \leq 1$, hence $ \frac{2 L D_f}{C} + \frac{\eta L}{C}\left(1+\frac{1}{C}\right) \leq C_1.$
\paragraph{Deriving the $\eta$-approximate KKT conditions.}	
 Next, consider the pair $(x_R, \lambda_R) = (x_R, \frac{\eta}{\hat\alpha_R}).$ Now we check the approximate KKT conditions.\\
 $\eta$-KKT.1 follows with high probability from the safety Proposition \ref{proposition} and Lemma \ref{lemma:3}.  \begin{align}
 &\Prob\left\{\lambda_R:= \frac{\eta}{\hat\alpha_R}\right\} \geq 0\\
 &\Prob\{-f^c_{\nu}(x_R) \geq \hat \alpha_R \geq 0\}\geq 1-\bar\de.
 \end{align}
$\eta$-KKT.2 also holds due to Lemma \ref{lemma:3}: 
\begin{align*}
\frac{\eta}{\hat \alpha_R}(-f^c_{\nu}(x_R)) &\leq \eta + \eta \frac{\hat \alpha_{R} - f^c_{\nu}(x_R)}{\hat \alpha_{R}} \leq \eta + \eta \frac{\sigma\sqrt{\ln 1/\de}/\sqrt{n_R} + \nu L}{\hat\alpha_R} \leq \eta + \eta \frac{2 \nu L}{\hat\alpha_R}  \leq 3\eta. \end{align*} \\
$\eta$-KKT.3 follows from the definition of $\lambda_R$ and (\ref{eq:Rbound}): $$\E\|\nabla f^0_{\nu}(x_R) + \lambda\nabla f^c_{\nu}(x_R)\| = \E\|\nabla B_{\eta,\nu}(x_R)\| \leq 5\eta.$$
\end{document}